\declaretheorem[name=Theorem]{theorem}
\declaretheorem[name=Lemma, sibling=theorem]{lemma}
\declaretheorem[name=Proposition, sibling=theorem]{proposition}
\declaretheorem[name=Conjecture, sibling=theorem]{conjecture}
\def\cqedsymbol{\ifmmode$\lrcorner$\else{\unskip\nobreak\hfil
\penalty50\hskip1em\null\nobreak\hfil$\lrcorner$
\parfillskip=0pt\finalhyphendemerits=0\endgraf}\fi}
\let\le\leqslant
\let\ge\geqslant
\let\geq\geqslant
\title{Chromatic number of spacetime}
\author[1]{James Davies}
\affil[1]{University of Cambridge, United Kingdom.}
\date{}
\begin{document}

\maketitle

\begin{abstract}
	We observe that an old theorem of Graham implies that for any positive integer $s$, there exists some positive integer $T(s)$ such that every $s$-colouring of $\mathbb{Z}^2$ contains a monochromatic pair of points $(x,y),(x',y')$ with $
	(x-x')^2 - (y-y')^2 = (T(s))^2$.
	By scaling, this implies that every finite colouring of $\mathbb{Q}^2$ contains a monochromatic pair of points $(x,y),(x',y')$ with $
	(x-x')^2 - (y-y')^2 = 1$, which answers in a strong sense a problem of Kosheleva and Kreinovich on a pseudo-Euclidean analogue of the Hadwiger-Nelson problem.
	
	The proof of Graham's theorem relies on repeated applications of van der Waerden's theorem, and so the resulting function $T(s)$ grows extremely quickly.
    We give an alternative proof in the weaker setting of having a second spacial dimension that results in a significantly improved bound.
    To be more precise, we prove that for every positive integer $s$ with $r\equiv 2 \pmod{4}$, every $s$-colouring of $\mathbb{Z}^3$ contains a monochromatic pair of points $(x,y,z),(x',y',z')$ such that $
    (x-x')^2 + (y-y')^2 - (z-z')^2 =  (5^{(s-2)/4}(8\cdot 5^{(s-2)/2})!)^2$.
    In fact, we prove a stronger density version.
    The density version in $\mathbb{Z}^2$ remains open.
\end{abstract}

\section{Introduction}\label{intro}

The Hadwiger-Nelson problem asks for the minimum number of colours required to colour $\mathbb{R}^2$ so as to avoid any monochromatic pair of points at unit distance apart. The upper bound of 7 was observed by Isbell in 1950 (see~\cite{soifer2009mathematical}), and the lower bound of 5 is a recent breakthrough of de Grey~\cite{de2018chromatic} and independently Exoo and Ismailescu~\cite{exoo2020chromatic}.
For more on the history of the Hadwiger-Nelson problem and many of its relatives, see~\cite{soifer2009mathematical}.

In 2009, Kosheleva and Kreinovich~\cite{kosheleva2008chromatic}, proposed the study of the analogue of the Hadwiger-Nelson problem for pseudo-Euclidean spaces.
For positive integers $p,q$, they define $\chi(\mathbb{R}^{p,q})$ to be equal to the minimum number of colours required to colour $\mathbb{R}^{p+q}$ so that there is no monochromatic pair of points $(a_1, \ldots , a_{p+q}), (b_1, \ldots , b_{p+q})$ with $\sum_{j=1}^p (a_j -b_j)^2 - \sum_{j=p+1}^{p+ q} (a_j-b_j)^2 = 1$.
They asked to determine the value of $\chi(\mathbb{R}^{p, q})$, and in particular, whether or not $\chi(\mathbb{R}^{p, q})$ is finite.
Note that if $p\le p'$, $q\le q'$, then $\chi(\mathbb{R}^{p, q}) \le \chi(\mathbb{R}^{p', q'})$ since we can restrict some coordinates to $0$.
We observe that one can use an old theorem of Graham \cite{graham1980} to show that $\chi(\mathbb{R}^{1, 1})= \infty$. Thus, $\chi(\mathbb{R}^{p, q})= \infty$ for all pairs $p,q\ge 1$.
Graham \cite{graham1980} proved that every finite colouring of the plane contains a monochromatic triple of points forming a triangle of unit area. We require a strengthening proven by Graham~\cite{graham1980}.

\begin{theorem}[Graham {\cite[Theorem~1]{graham1980}}]\label{Graham}
	For any positive integer $s$, there exists a positive integer $T(s)$ so that in any
	$s$-colouring of $\mathbb{Z}^2$, there is always a monochromatic
	triple $(u,v),(u+a,v),(u,v+b)\in \mathbb{Z}^2$ with $ab=T(s)$. 
\end{theorem}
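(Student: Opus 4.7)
The plan is to prove the theorem by induction on the number $s$ of colors, with the inductive step relying on repeated applications of van der Waerden's theorem (which matches the remark in the abstract about tower-type growth of $T(s)$). The base case $s=1$ is trivial: any value works, for example $T(1)=1$, since every axis-aligned triple is monochromatic.

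For the inductive step, suppose $T(s-1)$ has been constructed. Given an $s$-coloring of $\mathbb{Z}^2$, I first apply van der Waerden's theorem to a sufficiently long horizontal segment to obtain a monochromatic arithmetic progression $(u, v), (u+d, v), \ldots, (u+(\ell-1)d, v)$ of some color $c$, where $\ell$ is a large parameter to be chosen and $d$ is bounded in terms of $s$ and $\ell$, say by $W := W(s, \ell)$. I would then choose $T(s)$ to be divisible by $id$ for every $i \in \{1, \ldots, \ell-1\}$ and every admissible $d \in \{1, \ldots, W\}$; for instance a common multiple of $(\ell-1)! \cdot W!$ and $T(s-1)$. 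With such a choice the heights $h_i := T(s)/(id)$ are positive integers. If the lattice point $(u, v+h_i)$ has color $c$ for some $i$, then $(u, v), (u+id, v), (u, v+h_i)$ is a monochromatic triangle of color $c$ with leg product exactly $T(s)$, and we are done. Otherwise, the $\ell-1$ points $(u, v+h_i)$ are colored using only the $s-1$ colors in $[s] \setminus \{c\}$.

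To feed this dichotomy into the inductive hypothesis, I would repeat the extraction across many rows, applying pigeonhole to stabilize common differences and colors, and using an iterated van der Waerden argument on ``column types'' to turn a one-dimensional $(s-1)$-colored set into a genuine two-dimensional $(s-1)$-colored sub-lattice of $\mathbb{Z}^2$. On a suitably scaled copy of $\mathbb{Z}^2$ inside this sub-lattice, the inductive hypothesis produces a monochromatic triangle with leg product $T(s-1)$; choosing $T(s)$ so that $\lambda^2 \cdot T(s-1) \mid T(s)$ for every possible scaling factor $\lambda$ lets us convert this inductive triangle (with legs $\lambda a$, $\lambda b$, $\lambda^2 ab = \lambda^2 T(s-1)$) into a triangle under the original coloring whose leg product is exactly $T(s)$.

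The main obstacle is parameter management. The common difference $d$ from van der Waerden is a priori unknown (though bounded in terms of $s$ and $\ell$), so $T(s)$ must simultaneously absorb every product $id$, every subsequent scaling factor $\lambda$ arising from the iteration to a $2$-dimensional $(s-1)$-colored sub-lattice, and $T(s-1)$ itself. Each layer of induction therefore multiplies $T(s-1)$ by an enormous combinatorial factor derived from a van der Waerden number, which is exactly the mechanism by which the iterative proof forces $T(s)$ to grow tower-like.
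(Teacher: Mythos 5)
The paper does not actually prove this statement; it is cited directly from Graham's 1980 paper, with only the remark that Graham's argument ``relies on repeated applications of van der Waerden's theorem.'' So there is no in-paper proof to compare against, and your proposal must stand on its own. Its high-level shape (induction on $s$, van der Waerden at each stage, a $T(s)$ chosen to absorb every admissible common difference, hence tower-type growth) is in the right spirit, and the first half of the inductive step is sound: picking $T(s)$ divisible by $id$ for all $i<\ell$ and $d\le W$ so that the heights $h_i=T(s)/(id)$ are integers, and observing that a single point of colour $c$ at one of these heights closes the argument, is correct.

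The genuine gap is at the core of the inductive step, precisely where the outline lapses into prose. In the failure case you are left with only the $\ell-1$ points $(u,v+h_i)$ on a single vertical line, at heights $T(s)/(id)$ forming a harmonic rather than an arithmetic progression; this is a bounded-size, one-dimensional set, nowhere near a coloured copy of $\mathbb{Z}^2$, so the inductive hypothesis cannot yet be applied. The proposed repair — repeat across rows, pigeonhole to stabilise $d$ and $c$, run van der Waerden on ``column types'' to manufacture an $(s-1)$-coloured sub-lattice — is exactly the hard step and is not carried out. Two concrete obstructions to the sketch as written: (i) even after pigeonholing so that many base rows share the same $d$ and $c$, the \emph{starting points} $u$ of the respective APs are uncontrolled by van der Waerden, so the $c$-avoiding points from different rows do not line up into any lattice pattern; and (ii) if one instead runs van der Waerden on column types over a window of $M$ rows, the tall leg $b=T(s)/d$ must fit inside the window, forcing $M\ge T(s)$ when $d=1$, while $T(s)$ must in turn be divisible by every admissible $d$, which is bounded only by the van der Waerden number for $s^{M+1}$ colours — so $T(s)$ is forced to be astronomically larger than $M$, a circularity the sketch does not break. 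There is also a smaller bookkeeping issue at the end: requiring $\lambda^2 T(s-1)\mid T(s)$ does not by itself give a leg product \emph{equal} to $T(s)$; one must first isolate the easy lemma that if $T'$ works for $(s-1)$-colourings of $\mathbb{Z}^2$ then so does every multiple $kT'$ (stretch one coordinate by $k$ and re-apply), and then invoke the inductive statement at $T'=T(s)/\lambda^2$ rather than at $T(s-1)$.
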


\begin{theorem}\label{T}
	For any positive integer $s$, there exists a positive integer $T(s)$ so that in any
	$s$-colouring of $\mathbb{Z}^2$, there is always a monochromatic
	pair $(x,y),(x',y')\in \mathbb{Z}^2$ with $(x-x')^2 - (y-y')^2 = (2T(s))^2$.
\end{theorem}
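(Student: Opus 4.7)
The plan is to deduce Theorem~\ref{T} from Theorem~\ref{Graham} by applying the latter to a transformed colouring. The guiding observation is that the hyperbolic rotation $(u,v) \mapsto (u-v,\, u+v)$ intertwines the axis-aligned rectangle picture in Graham's theorem with the pseudo-Euclidean form $p^2 - q^2$: the two axis-aligned edges of a rectangle are sent to vectors of zero pseudo-Euclidean length, and the diagonal picks up a pseudo-Euclidean length that is (a constant times) the product of the two side lengths.

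Given an $s$-colouring $c$ of $\mathbb{Z}^2$, I would pull $c$ back along a version of this rotation that also scales the second coordinate, defining
\[
\tilde c(u, v) := c\bigl(u - T(s)\,v,\; u + T(s)\,v\bigr),
\]
where $T(s)$ is the constant produced by Theorem~\ref{Graham}. This is a well-defined $s$-colouring of $\mathbb{Z}^2$. Applying Theorem~\ref{Graham} to $\tilde c$ yields a monochromatic triple $(u, v),\, (u+a, v),\, (u, v+b)$ with $ab = T(s)$. Unfolding the definition, the three points $\bigl(u - T(s)v,\, u + T(s)v\bigr)$, $\bigl(u+a - T(s)v,\, u+a + T(s)v\bigr)$, $\bigl(u - T(s)(v+b),\, u + T(s)(v+b)\bigr)$ are monochromatic under $c$. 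I would single out the second and third: their coordinate-difference is $(a + T(s)b,\; a - T(s)b)$, so
\[
(x-x')^2 - (y-y')^2 = (a + T(s)b)^2 - (a - T(s)b)^2 = 4\,T(s)\cdot ab = (2T(s))^2,
\]
as required, and the two points are distinct since $a + T(s)b \neq 0$.

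The factor $T(s)$ inserted into the second coordinate of the transformation is the essential trick: without it one obtains $4ab = 4T(s)$ on the right-hand side, which is not of the form $(2T)^2$ unless $T(s)$ is a perfect square. Absorbing an extra factor of $T(s)$ via the scaling turns the product $ab = T(s)$ from Graham's triple into the perfect square $T(s)^2$ the statement demands. I do not anticipate any substantive obstacle: once the right hyperbolic-rotation-plus-scaling is identified, the argument reduces to the direct verification above, and the theorem holds with the same constant $T(s)$ that appears in Theorem~\ref{Graham}.
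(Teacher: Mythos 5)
Your proof is correct and is essentially the same as the paper's: the paper applies Graham's theorem to the sublattice $\mathbb{Z}[(1,1),(-T(s),T(s))]$ and picks the two points $(u,v)+a(1,1)$ and $(u,v)+b(-T(s),T(s))$, which is exactly the hyperbolic-rotation-plus-scaling pullback you describe. The computation $(a+T(s)b)^2 - (a-T(s)b)^2 = 4T(s)\cdot ab = (2T(s))^2$ is identical in both.
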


\begin{proof}
	Consider some $s$-colouring of $\mathbb{Z}^2$.
	By Theorem \ref{Graham}, there exists a monochromatic triple of points $(u,v),(u,v)+a(1,1),(u,v)+b(-T(s),T(s)) \in \mathbb{Z}[(1,1),(-T(s),T(s))]\subset \mathbb{Z}^2$ with $ab=T(s)$.
	Then, observe that $(a+bT(s))^2 - (a-bT(s))^2 = 4abT(s) = (2T(s))^2$.
	Therefore, $(x,y)= (u,v)+a(1,1)$, $(x',y') = (u,v)+b(-T(s),T(s))$ is a monochromatic pair of points in $\mathbb{Z}^2$ with $(x-x')^2 - (y-y')^2 = (2T(s))^2$, as desired.
\end{proof}

By scaling, Theorem \ref{T} then implies the following, which resolves Kosheleva and Kreinovich's~\cite{kosheleva2008chromatic} problem.

\begin{theorem}\label{main1}
	Every finite colouring of $\mathbb{Q}^2$ contains a monochromatic pair of points $(x,y),(x',y')$ with $
	{(x-x')^2} - (y-y')^2 = 1$.
	In particular, $\chi(\mathbb{R}^{1, 1})= \infty$.
\end{theorem}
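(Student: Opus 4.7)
The plan is to deduce Theorem~\ref{main1} from Theorem~\ref{T} by a direct scaling argument, since Theorem~\ref{T} already produces a monochromatic pair at a prescribed pseudo-Euclidean squared-distance in $\mathbb{Z}^2$; the only work is to reduce the ``$(2T(s))^2$'' to ``$1$'' by dilating the lattice.

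More precisely, fix a finite colouring $c\colon \mathbb{Q}^2 \to \{1,\dots,s\}$. Let $T=T(s)$ be as in Theorem~\ref{T}, and define an induced $s$-colouring $c'$ of $\mathbb{Z}^2$ by setting
\[
    c'(i,j) \;\eqdef\; c\!\left(\tfrac{i}{2T},\,\tfrac{j}{2T}\right),
\]
which is well-defined since $\tfrac{1}{2T}\mathbb{Z}^2 \subset \mathbb{Q}^2$. Applying Theorem~\ref{T} to $c'$ yields a $c'$-monochromatic pair $(x,y),(x',y')\in\mathbb{Z}^2$ with $(x-x')^2-(y-y')^2=(2T)^2$. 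Dividing all coordinates by $2T$, the pair $\bigl(\tfrac{x}{2T},\tfrac{y}{2T}\bigr),\bigl(\tfrac{x'}{2T},\tfrac{y'}{2T}\bigr)\in\mathbb{Q}^2$ is $c$-monochromatic by construction, and a direct computation gives
\[
    \left(\tfrac{x-x'}{2T}\right)^2 - \left(\tfrac{y-y'}{2T}\right)^2 \;=\; \tfrac{(2T)^2}{(2T)^2} \;=\; 1,
\]
as required for the first assertion.

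For the ``in particular'' clause, any finite colouring of $\mathbb{R}^2$ restricts to a finite colouring of $\mathbb{Q}^2$, to which we apply the statement just proved: it must contain a monochromatic pair at pseudo-Euclidean squared-distance $1$. Hence no finite colouring of $\mathbb{R}^2$ avoids such a pair, so $\chi(\mathbb{R}^{1,1})=\infty$; the monotonicity $\chi(\mathbb{R}^{p,q})\le \chi(\mathbb{R}^{p',q'})$ for $p\le p'$, $q\le q'$ noted in the introduction then also gives $\chi(\mathbb{R}^{p,q})=\infty$ for all $p,q\ge 1$.

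There is really no hard step here — the substance is entirely in Theorems~\ref{Graham} and~\ref{T}. The only thing one needs to notice is that Theorem~\ref{T} is scale-invariant on rationals: the squared distance $(2T(s))^2$ becomes $1$ after uniformly dilating by $1/(2T(s))$, and the dilated lattice $\tfrac{1}{2T(s)}\mathbb{Z}^2$ still sits inside $\mathbb{Q}^2$, so the monochromaticity is preserved. The mildest potential pitfall is remembering that the argument requires rational (not just integer) points precisely because $1/(2T(s))$ is rational rather than integral, which is why the conclusion is stated over $\mathbb{Q}^2$ rather than $\mathbb{Z}^2$.
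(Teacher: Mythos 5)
Your proof is correct and is exactly the scaling argument the paper invokes in one line (``By scaling, Theorem~\ref{T} then implies the following''); you have simply spelled out the details of pulling back the colouring along the dilation by $1/(2T(s))$ and verifying the squared pseudo-distance rescales to $1$.
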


For every positive integer $d$, there is a finite colouring of $\mathbb{R}^d$ avoiding monochromatic pairs of points at unit distance apart~\cite{larman1972realization}. So, Theorem~\ref{main1} highlights a subtle but significant difference between the geometry of Euclidean spaces and certain pseudo-Euclidean spaces.
This also answers a question of Geelen~\cite{Jim-personal}, who independently asked whether or not $\chi(\mathbb{R}^{2, 2})$ is finite.
Kosheleva and Kreinovich's~\cite{kosheleva2008chromatic} problem was also independently raised by Madore \cite{Madore} who further observed that $\chi(\mathbb{R}^{1, 1})= \infty$ implies that $\chi(\mathbb{C}^{2})= \infty$, where $\chi(\mathbb{C}^{2})$ is equal to the minimum number of colours required to colour $\mathbb{C}^2$ so that there is no monochromatic pair of points $(x,y),(x',y')\in \mathbb{C}^2$ with $(x-x')^2 + (y-y')^2=1$.

Graham's \cite{graham1980} proof of Theorem \ref{Graham} relies on repeated applications of van der Waerden's theorem \cite{van}, and thus the value of $T(s)$ in Theorem \ref{Graham} and Theorem \ref{T} grows extremely fast.
We remark that ``$(2T(s))^2$'' cannot be replaced with ``$1$'' in Theorem \ref{T}, since one can avoid such monochromatic pairs in $\mathbb{Z}^2$ by 2-colouring the points $(x,y) \in \mathbb{Z}^2$ according to whether $x+y$ is odd or even.
We give an alternative proof of a weakening of Theorem \ref{T} where there is a second spacial dimension. Although this theorem is in a weaker setting, we obtain significantly improved bounds.

\begin{theorem}\label{main3}
	Let $s$ be a positive integer with $s\equiv 2 \pmod{4}$.
	Then, every $s$-colouring of $\mathbb{Z}^3$ contains a monochromatic pair of points $(x,y,z),(x',y',z')$ with $
	(x-x')^2 + (y-y')^2 - (z-z')^2 =  (5^{(s-2)/4}(8\cdot 5^{(s-2)/2})!)^2$.
\end{theorem}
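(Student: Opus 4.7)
The plan is to derive Theorem~\ref{main3} from a stronger density variant: for every $s\equiv 2\pmod{4}$, any $A\subseteq\mathbb{Z}^3$ of upper density at least $1/s$ in a sufficiently large box must contain two points at pseudo-distance exactly $D:=5^{(s-2)/4}(8\cdot 5^{(s-2)/2})!$. The colouring statement of Theorem~\ref{main3} then follows immediately by pigeonholing on the largest colour class, and the density formulation cleanly separates the combinatorial content (producing a pair at some controlled distance) from the arithmetic content (forcing the distance to equal $D$).

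The geometric engine is the rational Lorentz group of $(\mathbb{Z}^3, Q)$ with $Q(x,y,z)=x^2+y^2-z^2$. Multiplication by the Gaussian integer $3+4i$ on $\mathbb{Z}^2$, extended by the identity on the $z$-coordinate, gives an integer endomorphism $\Phi\colon\mathbb{Z}^3\to\mathbb{Z}^3$ with $Q(\Phi(v))=25\,Q(v)$. Because $(3+4i)/5$ has infinite order on the unit circle, iterating $\Phi$ (and composing with ordinary $xy$-reflections and permutations) produces, for every $k$, integer vectors in $\mathbb{Z}^3$ of pseudo-length exactly $5^k$ in many distinct angular directions. This is the source of the factor $5^{(s-2)/4}$ in the final bound.

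I would then iterate a rotation/density-increment step $(s-2)/4$ times. At each step, given a dense set $A'\subseteq\mathbb{Z}^3$ and a current target pseudo-distance $D'$, either $A'$ already contains a pair at pseudo-distance $D'$, or $A'$ has comparable density on a residue coset of the image of $\Phi$, in which case we pull back by $\Phi$ to a dense subset of $\mathbb{Z}^3$ whose target pseudo-distance has contracted by a factor of $5$. After $(s-2)/4$ reductions, the target has dropped to $(8\cdot 5^{(s-2)/2})!$, and the remaining task is to find a pair at this pseudo-distance inside a dense subset of $\mathbb{Z}^3$. This is the endgame: for each integer $j\in\{1,\dots,8\cdot 5^{(s-2)/2}\}$ consider a family of parallel lines of pseudo-length $j$; on some line the surviving density of $A'$ is at least $1/s$, and a pigeonhole in an arithmetic progression of length $8\cdot 5^{(s-2)/2}$ on this line yields two points at some integer gap~$j$. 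Since every such $j$ divides $(8\cdot 5^{(s-2)/2})!$, the gap can be converted (using that the AP is long enough) into a pair at pseudo-distance exactly $(8\cdot 5^{(s-2)/2})!$, and undoing the $(s-2)/4$ rotations yields the required pair at pseudo-distance $D$.

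The main obstacle will be the quantitative bookkeeping of density through the $(s-2)/4$ rotation steps: each step risks a density loss of up to a factor of $5$ from the choice of residue modulo $5$, and the initial density $1/s$ must be robust enough for the argument to survive all reductions. This is exactly why the exponent of $5$ inside the factorial is $(s-2)/2$ rather than $(s-2)/4$: the terminal progression must be long enough to absorb the remaining density budget while still realising a gap that divides the factorial. The factorial bound, as opposed to Graham's tower-type bound from iterated van~der~Waerden, is precisely the reward for replacing the van~der~Waerden step with a single pigeonhole in an AP at the end, made possible by the divisibility properties of $(8\cdot 5^{(s-2)/2})!$.
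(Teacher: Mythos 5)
Your proposal takes a genuinely different route from the paper, but it has substantial gaps that I do not think can be repaired without importing the paper's actual machinery. The paper does not use a density-increment argument at all; it defines the Cayley graph $G(\mathbb{Z}^3, C_r)$ with generating set $C_r = \{\pm((8r^2)!\,d + j\tilde d,\, jr) : d\in D_r,\ j\in\mathbb{N}\}$, where $D_r$ is the set of lattice points on the circle of radius $r$, checks that every generator has pseudo-length exactly $(r(8r^2)!)^2$ (this is Lemma~\ref{embed}), and then bounds the independence ratio via the Fourier-analytic ratio bound (Theorem~\ref{ratio bound}, an analogue of the Lov\'asz theta bound). The entire quantitative content lives in the estimate $\limsup_n \inf_u \widehat{w}_{r,n}(u) \ge -2$, proved via Lemmas~\ref{zero}, \ref{geometric series} and~\ref{circle}. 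The role of $r=5^b$ is simply to make $|D_r| = 8b+4$ large via Jacobi's two-square theorem, and the factorial $(8r^2)!$ is there to kill the denominators produced in Lemma~\ref{circle}, not to absorb a pigeonhole over gaps.

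Concretely, your argument has the following problems. First, the map $\Phi$ given by multiplying the $xy$-plane by $3+4i$ and fixing $z$ does \emph{not} satisfy $Q(\Phi(v)) = 25\,Q(v)$: it gives $25(x^2+y^2) - z^2$, not $25(x^2+y^2-z^2)$; you would need to scale $z$ by $5$ as well, at which point $\Phi$ is just dilation by $5$ of a Lorentz rotation and the ``pull back by $\Phi$'' step collapses to a rescaling. Second, and more seriously, the density-increment dichotomy (``either $A'$ has a pair at pseudo-distance $D'$ or $A'$ concentrates on a coset of $\Phi(\mathbb{Z}^3)$'') is asserted but not justified, and it is not clear why avoidance of a single hyperbola of pseudo-distances should force linear structure; this is precisely the content that the paper's Fourier estimate supplies, and you have no substitute for it. Third, the endgame is not sound: finding two same-coloured points on a line at some gap $j$ dividing $N=(8\cdot5^{(s-2)/2})!$ does not pigeonhole into two same-coloured points at gap $N$ --- a line $s$-coloured periodically with period $j$ need not have same-coloured points at distance $N$ unless $j\mid N$ \emph{and} you also control the colouring on a full period, which pigeonhole alone does not give. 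Finally, the set $D_r$ of lattice points on the circle --- which is where the bound $s=4b+2$ actually comes from --- appears nowhere in your argument; without it the factor $|D_r|/2$ in the density threshold is unaccounted for.
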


Our proof of Theorem~\ref{main3} uses the method recently introduced by the author~\cite{davies2022odd} to show that every finite colouring of $\mathbb{R}^2$ contains a monochromatic pair of points whose distance is an odd integer.
With McCarty and Pilipczuk~\cite{davies2023prime}, we also very recently extended these methods to prove that every finite colouring of $\mathbb{R}^2$ contains monochromatic prime and polynomial distances.
This method is applied to subgraphs that are Cayley graphs of $\mathbb{Z}^d$.
Given a set $I\subseteq \mathbb{Z}^d$, its \emph{upper density} is
\[
\delta(I) = \limsup_{R \to \infty} \frac{|I \cap [-R,R]^d  |}{(2R+1)^d}.
\]
For a positive integer $r$, we let $
D_r= \{ (x,y) \in \mathbb{Z}^2 : x^2 + y^2 = r^2 \}$.
In other words, $D_r$ is the set of points contained in both the integer lattice $\mathbb{Z}^2$ and the circle of radius $r$ and center $(0,0)$.
We prove the following.

\begin{theorem}\label{main2}
    Let $r$ be a positive integer.
    If $I\subseteq \mathbb{Z}^3$ has upper density greater than $\frac{1}{1+|D_r|/2}$, then there is a pair of points $(x,y,z),(x',y',z')\in I$ with $
    (x-x')^2 + (y-y')^2 - (z-z')^2 = (r(8r^2)!)^2$.
    
    As a consequence, every $(|D_r|/2)$-colouring of $\mathbb{Z}^3$ contains a monochromatic pair of points $(x,y,z),(x',y',z')$ with $
    (x-x')^2 + (y-y')^2 - (z-z')^2 = (r(8r^2)!)^2$.
\end{theorem}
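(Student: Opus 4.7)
The plan is a density-increment argument on the Cayley graph $G = (\mathbb{Z}^3, E)$ where $E = \{v \in \mathbb{Z}^3 : v_1^2+v_2^2-v_3^2 = R^2\}$ and $R = r(8r^2)!$; write $N = (8r^2)!$. I first reduce the density statement to exhibiting a finite $F \subseteq \mathbb{Z}^3$ with $|F|/\alpha_G(F) \geq 1 + |D_r|/2$, where $\alpha_G(F)$ is the independence number of the induced subgraph $G[F]$. This reduction is the standard averaging argument: summing $|I \cap (v+F)|$ over $v$ in a large cube produces a translate with more than $\alpha_G(F)$ points of $I$ whenever $I$ has upper density $\delta > 1/(1+|D_r|/2)$, forcing an $E$-edge. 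The colouring consequence then follows by pigeonhole, since the largest colour class in a $(|D_r|/2)$-colouring has density $\geq 2/|D_r| > 1/(1+|D_r|/2)$.

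The core of the proof is the construction of $F$. The natural target is $|F| = |D_r|+2$ with $G[F]$ containing the cocktail party graph $K_{|D_r|+2}$ minus a perfect matching, giving $\alpha_G(F) \leq 2$. Place two polar vertices $u_1 = (0,0,0)$ and $u_2 = (0,0,2\gamma)$ along the $z$-axis, and for each $(a,b) \in D_r$ place a vertex $P_{(a,b)}$ on the plane $z = \gamma$ with first two coordinates at an integer point on the Euclidean circle of squared radius $R^2+\gamma^2$. A direct Lorentzian-norm computation verifies that $P_{(a,b)}$ is adjacent to both $u_1$ and $u_2$ in $G$ regardless of the choice of its first two coordinates, while $u_2 - u_1 = (0,0,2\gamma) \notin E$. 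The intended non-edges of $G[F]$ are thus $\{u_1,u_2\}$ together with the $|D_r|/2$ antipodal pairs $\{P_{(a,b)}, P_{(-a,-b)}\}$.

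The main obstacle is arranging the $P_{(a,b)}$ so that every non-antipodal pair has chord of squared Euclidean length exactly $R^2 = r^2 N^2$, i.e.\ the difference lies in $D_R$. This reduces to an intrinsically $2$-dimensional problem on the circle of squared radius $R^2+\gamma^2$: find $|D_r|$ integer points whose chord set coincides with $D_R$ off an antipodal matching. I expect the construction to proceed by Gaussian-integer multiplication---setting $\gamma = cN$ for a carefully chosen integer $c$, with $r^2+c^2$ admitting a clean factorization in $\mathbb{Z}[i]$---to produce a scaled and rotated copy of $D_r$ on the upper circle whose chord set matches $N\cdot D_r \subseteq D_R$ off antipodes. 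The divisibility built into $N = (8r^2)!$ ($k \mid N$ for every $k \leq 8r^2$, in particular $r \mid N$ and $|D_r|! \mid N$) is what lets all of the fractional quantities arising in this Gaussian-rotation construction collapse to integers. Verifying the chord-matching property is the delicate technical step, and is where the full strength of the factorial is used; I also anticipate that a naive first attempt at the Gaussian rotation will produce chords of the wrong length, so some extra care (perhaps stacking several rotated copies, or combining the Euclidean edges $(aN,bN,0)$ with the ``light-like'' edges $(R,t,t) \in E$ that are available in $3$D) will be needed to pin down the correct construction.
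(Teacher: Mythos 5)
Your proposal takes a genuinely different route from the paper, but it has a fatal gap in its central construction. The paper's proof is Fourier-analytic and never exhibits a finite subgraph with small independence ratio: it builds an explicit infinite Cayley subgraph $G(\mathbb{Z}^3,C_r)$ of the Lorentzian distance graph, with $C_r=\{\pm((8r^2)!\,d+j\tilde d,\,jr):d\in D_r,\ j\in\mathbb{N}\}$, and bounds the independence density $\overline\alpha$ directly via the ratio bound (Theorem~\ref{ratio bound}, a Lov\'asz-theta analogue for Cayley graphs of $\mathbb{Z}^d$), estimating $\sup_u\widehat w_{r,n}(u)$ and $\limsup_n\inf_u\widehat w_{r,n}(u)$ through the exponential-sum Lemmas~\ref{zero}--\ref{circle}. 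Your averaging reduction to a finite witness $F$ with $|F|/\alpha(G[F])\ge 1+|D_r|/2$ is sound, as is the pigeonhole step for the colouring consequence; but the construction of $F$ carries all the weight, and you neither complete it nor could complete it as stated.

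Concretely, adjacency of $P$ to both $u_1=(0,0,0)$ and $u_2=(0,0,2\gamma)$ forces $P=(x,y,\gamma)$ with $x^2+y^2=R^2+\gamma^2$, so all $|D_r|$ points $P_{(a,b)}$ lie on a Euclidean circle of radius $\rho=\sqrt{R^2+\gamma^2}>R$ in the plane $z=\gamma$. For $\alpha(G[F])\le 2$, the pairs of $P$'s whose planar chord is \emph{not} equal to $R$ must form a matching, so each $P$ must be at chord distance exactly $R$ from at least $|D_r|-2$ of the others. But the circle of radius $R$ about any point of our circle meets it in exactly two points (since $0<R<2\rho$), so each $P$ has at most two $R$-neighbours; hence $|D_r|-2\le 2$, forcing $|D_r|=4$ (and $|D_r|\ge 4$ always). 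With four points, each must then have exactly two $R$-neighbours and one non-$R$-neighbour, so the $R$-chords form a $4$-cycle of equal chords through four distinct concyclic points; writing each chord as $2\rho\sin(\beta/2)$ and forcing the signed arc increments to sum to a nonzero multiple of $2\pi$ while keeping the points distinct gives the inscribed square, i.e.\ $\rho=R/\sqrt2<R$, a contradiction. So no choice of $\gamma$, and no Gaussian-integer or factorial-divisibility cleverness, can produce this configuration. The fallbacks you gesture at (stacking rotated copies, light-like edges $(R,t,t)$) would have to replace the construction wholesale rather than patch it, and you do not carry them out; absent a concrete finite $F$, the argument does not close.
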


If $r$ has prime factorization $r=2^a \prod p_i^{b_i} \prod q_i^{c_i}$ where $p_i \equiv 1 \pmod{4}$ and $q_i \equiv 3 \pmod{4}$, then by Jacobi's two-square theorem (see \cite{hardybook}), we have that $|D_r|=4 \prod (2b_i +1)$.
Therefore, by Theorem \ref{main2} with $r=5^b$, every $(4b+2)$-colouring of $\mathbb{Z}^3$ contains a monochromatic pair of points $(x,y,z),(x',y',z')$ with $
(x-x')^2 + (y-y')^2 - (z-z')^2 = (5^{b}(8\cdot 5^{2b})!)^2$. Theorem \ref{main3} then follows from this by setting $b=(s-2)/4$. We remark that one could of course obtain slightly stronger bounds in Theorem \ref{main3}, but we choose to provide a simple explicit bound.

Note that Theorem \ref{main2} further provides a density strengthening of Theorem \ref{main3}.
We conjecture an analogues density strengthening of Theorem \ref{T}.

\begin{conjecture}\label{conjecture}
    For every $\epsilon > 0$, there exists some positive integer $d$ so that if $I\subseteq \mathbb{Z}^2$ has upper density greater than $\epsilon$, then there is a pair of points $(x,y),(x',y')\in I$ with $
    (x-x')^2 - (y-y')^2 = d^2$.
\end{conjecture}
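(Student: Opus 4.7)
My primary plan is to reduce Conjecture~\ref{conjecture} to a density version of Graham's theorem: for every $\epsilon > 0$ there is a positive integer $T=T(\epsilon)$ so that every $I \subseteq \mathbb{Z}^2$ of upper density greater than $\epsilon$ contains a triple $(u,v),(u+a,v),(u,v+b)$ with $ab=T$. Given such a density Graham, the proof of Theorem~\ref{T} upgrades automatically. Indeed, $[\mathbb{Z}^2:L]=2T$ for the sublattice $L=\mathbb{Z}[(1,1),(-T,T)]$, so averaging over the $2T$ cosets of $L$ locates one coset on which $I$ still has upper density at least $\epsilon$; density Graham produces a triangle of area $T$ inside (the $\mathbb{Z}^2$-identification of) that coset, and the identity $(a+bT)^2-(a-bT)^2=4abT$ converts it to a pair in $I$ satisfying $(x-x')^2-(y-y')^2=(2T)^2$, yielding the conjecture with $d=2T(\epsilon)$.

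\textbf{Attacking density Graham.} Choose $T$ extremely composite, say $T=(8D)!$ for some $D=D(\epsilon)$. Each divisor $n\mid T$ defines an axis-aligned pattern $(u,v),(u+n,v),(u,v+T/n)$, and one may try to lower-bound the total count of such patterns in any $\epsilon$-dense $I$ by Fourier analysis on $\mathbb{Z}^2$ summed over divisors of $T$, pigeonholing over the many divisors of a factorial to defeat any single periodic obstruction. A more direct alternative is to attack Conjecture~\ref{conjecture} without passing through Graham, by forming the Cayley graph of $\mathbb{Z}^2$ with generator set $H_d=\{(a,b)\in\mathbb{Z}^2:a^2-b^2=d^2\}$ and attempting to replicate the density-threshold bound of Theorem~\ref{main2} via the harmonic-analytic machinery of~\cite{davies2022odd,davies2023prime}.

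\textbf{Main obstacle.} Both routes stumble on the same geometric feature: the integer solutions of $a^2-b^2=d^2$ correspond to factorisations $d^2=(a-b)(a+b)$ and so cluster along the asymptotes $a=\pm b$ rather than equidistributing in direction. The relevant exponential sums are Kloosterman-type, yielding much weaker $L^2$ savings than the Gauss-sum estimates available for the circle $D_r$ underlying Theorem~\ref{main2}. Compounding this, periodic colourings such as $(x,y)\mapsto(x\bmod q, y\bmod q)$ annihilate every pair whose difference lies in $q\mathbb{Z}^2$, so $d$ must be engineered so that many pairs in $H_d$ are simultaneously coprime to all small moduli. Reconciling the weak per-divisor Fourier input with this strong structural demand — while keeping $d$ a \emph{single} integer depending only on $\epsilon$, rather than a family — appears to require a genuinely new analytic ingredient, and is presumably why the density strengthening in $\mathbb{Z}^2$ remains open while its $\mathbb{Z}^3$ analogue in Theorem~\ref{main2} is accessible.
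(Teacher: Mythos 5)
The statement you were given is not a theorem of the paper but Conjecture~\ref{conjecture}, which the paper explicitly poses as open: both the abstract (``The density version in $\mathbb{Z}^2$ remains open'') and the discussion immediately following the conjecture say so. There is therefore no ``paper's own proof'' to compare against, and your submission is rightly not a proof but a plan-and-obstacle analysis whose bottom line agrees with the paper's: a density version of Graham's theorem (Theorem~\ref{Graham}) would give the conjecture, but that density version is itself open in $\mathbb{Z}^2$.

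Two substantive remarks on your analysis. First, your claim that given density Graham ``the proof of Theorem~\ref{T} upgrades automatically'' glosses over a genuine circularity that the paper's one-line remark also leaves implicit. The sublattice $L=\mathbb{Z}[(1,1),(-T,T)]$ is defined in terms of $T$; passing to a coset of $L$ and reparametrising by the lattice basis degrades the upper density by a factor depending on $T$; and only then can density Graham be invoked, producing some $T'$ that depends on the degraded density. Nothing forces $T'=T$, nor even $4TT'$ to be a perfect square, so making the implication rigorous needs either a fixed-point argument for the function $\epsilon\mapsto T(\epsilon)$ or a uniform formulation of density Graham (say, valid in every finite-index sublattice with the same $T$). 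If you pursue this route you should state precisely which form of density Graham you are assuming. Second, your diagnosis of why the direct Fourier attack in $\mathbb{Z}^2$ fails is qualitatively right: the method of~\cite{davies2022odd} hinges on Lemma~\ref{circle}, which exploits the fact that three distinct lattice points on a common circle rigidly constrain $(a,b)$ to be rational with bounded denominator, and the lattice points on $a^2-b^2=d^2$ sit on a hyperbola whose branches hug the asymptotes $a=\pm b$, offering no such angular spread. This is exactly why the paper introduces a third (spatial) coordinate in Theorem~\ref{main2}: the generating set $C_r$ is built from the circle $D_r$, recovering the geometric rigidity that the plane hyperbola lacks. I would, however, drop the specific appeal to Kloosterman-sum versus Gauss-sum savings; the paper's argument is a structural rationality argument plus a ratio bound, not a classical exponential-sum estimate, so that framing overstates what is known about where the difficulty lies.
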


Of course, a density version of Graham's theorem \cite{graham1980} (Theorem \ref{Graham}) would imply Conjecture \ref{conjecture}, but this remains open (see \cite{Kovac}). Bardestani and Mallahi-Karai \cite{Bardestani} proved a measurable analogue of Conjecture \ref{conjecture} in $\mathbb{R}^2$ (see also \cite{Kovac}).
Kovač \cite{Kovac} very recently proved a density version of Graham's theorem \cite{graham1980} in the measurable setting.
The following weakening of Conjecture \ref{conjecture} with more temporal dimensions also remains open.

\begin{conjecture}\label{conjecture2}
    There exists an integer $m\ge 2$ with the following property.
    For every $\epsilon > 0$, there exists some positive integer $d$ so that if $I\subseteq \mathbb{Z}^m$ has upper density greater than $\epsilon$, then there is a pair of points $(x_1,\ldots , x_m),(x_1', \ldots x_m')\in I$ with $
    (x_1-x_1')^2 - \sum_{j=2}^m (x_j-x_j')^2 = d^2$.
\end{conjecture}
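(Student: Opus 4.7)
The plan is to generalise the method used in the proof of Theorem~\ref{main2}, but with a single spatial coordinate and $m-1$ temporal coordinates, working in dimension $m\geq 4$ so that the $(m-1)$-dimensional negative-definite part of the form contains many integer points on spheres. Given $\epsilon>0$, I would first choose a positive integer $r$ such that the set $S_r=\{v\in\mathbb{Z}^{m-1}:\|v\|^2=r^2\}$ satisfies $|S_r|/2\geq 1/\epsilon-1$; for $m-1\geq 3$ this is possible by Gauss's three-square theorem (or Jacobi's four-square formula for $m-1=4$, using $r=5^b$ as in the derivation of Theorem~\ref{main3} from Theorem~\ref{main2}). For each $v\in S_r$, both $(r,v)$ and $(-r,v)$ in $\mathbb{Z}^m$ are null with respect to the form $Q(x)=x_1^2-\sum_{j\geq 2}x_j^2$, providing a large family of null generators.

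Next, with $N=(8r^2)!$ (ensuring divisibility by all small integers), I would set up the Cayley-graph/averaging framework of Theorem~\ref{main2}: the target difference is $N(r,0,\dots,0)$, of $Q$-value $(Nr)^2$, while the auxiliary generators are scaled null vectors $N(\pm r,v)$ with $v\in S_r$. The goal is to exhibit a finite configuration $\{v_0,\dots,v_k\}\subseteq\mathbb{Z}^m$ with $k+1\geq 1+|S_r|/2$ such that every pair $(v_i,v_j)$ is joined either by a ``contractible'' null displacement (from which one can iterate by telescoping) or by the target displacement of $Q$-value $(Nr)^2$. A standard averaging argument over translates of this configuration modulo a large cube, combined with the hypothesis $\delta(I)>1/(1+|S_r|/2)<\epsilon$, then forces two points of $I$ to realise the target displacement, giving the conjecture with $d=Nr$.

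The main obstacle, and presumably why Conjecture~\ref{conjecture2} remains open, is actually producing such a configuration. In the signature $(2,1)$ of Theorem~\ref{main2}, the positive-definite plane contains a circle of radius $r$ whose reflective symmetries interact perfectly with the target displacement $(Nr,0,0)$ to organise the null translates into a clique of size $|D_r|/2+1$. For signature $(1,m-1)$ the sphere $S_r$ instead lives in the negative-definite part, and the direct analogue of that identity becomes $Q(N(2r,v-v'))=N^2(4r^2-\|v-v'\|^2)$; forcing this to equal $(Nr)^2$ demands $\|v-v'\|^2=3r^2$, which is a restricted arithmetic condition and does not close up into a clique on its own. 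Overcoming this will likely require combining several identities of this type into a genuinely higher-dimensional clique, or importing a Fourier-analytic input along the lines of the measurable results of Bardestani--Mallahi-Karai~\cite{Bardestani} and Kovač~\cite{Kovac}, suitably discretised to $\mathbb{Z}^m$.
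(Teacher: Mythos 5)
This statement is Conjecture~\ref{conjecture2}, which the paper explicitly leaves open, so there is no proof in the paper to compare against. Your submission is not a proof and does not pretend to be one: you lay out a plan, then correctly identify where it stalls. The core of your diagnosis is right. The construction behind Theorem~\ref{main2} lives in signature $(2,1)$ and leans essentially on the two-dimensional positive-definite block: the circle $D_r$ sits there, each $d\in D_r$ comes with the orthogonal rotate $\tilde d$ of the same length $r$, and this is exactly what makes $(\tilde d, r)$ a null vector and keeps $Q\bigl((8r^2)!d + j\tilde d,\, jr\bigr)$ constant as $j$ varies. In signature $(1,m-1)$ the positive block is a line, so there is no circle there and no orthogonal rotate; parking the sphere $S_r$ in the negative block, as you try, yields exactly the mismatched constraint $\|v-v'\|^2=3r^2$ you compute, which does not close up. So the obstacle you name is genuine and is, to the best of anyone's knowledge, why the conjecture is open.

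Two smaller points. First, your description of the paper's mechanism as building a finite ``clique'' with contractible null displacements and running a translate-averaging argument is not how the paper actually proceeds: the paper uses an infinite Cayley graph $G(\mathbb{Z}^3, C_r)$ and the Fourier ratio bound of Theorem~\ref{ratio bound}, with the independence-density estimate coming from Lemmas~\ref{zero}--\ref{circle}, not from an explicit finite configuration. You do gesture at a ``Fourier-analytic input'' later, but the first framing misrepresents the method and would lead you astray if you tried to execute it literally. Second, your appeal to Gauss's three-square theorem for $m-1=3$ is too casual: the three-square theorem tells you which integers are represented, not that $r_3(r^2)$ can be made arbitrarily large for a convenient family of $r$; the growth of $r_3$ is governed by class numbers and needs more care (your fallback to Jacobi's formula for $m-1=4$ is clean). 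None of this changes the bottom line: there is no proof here, and your honest identification of the structural obstruction is the most valuable part of the submission.
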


In Section~\ref{prelims} we introduce some preliminaries for proving Theorem~\ref{main2}. This includes an analogue of the Lov{\'a}sz theta bound~\cite{lovasz1979shannon} for Cayley graphs of $\mathbb{Z}^d$~\cite{davies2022odd}, which shall be our main tool.
Then, in Section~\ref{proof}, we prove Theorem~\ref{main2} (and thus Theorem~\ref{main3}).

\section{Preliminaries}\label{prelims}

The \emph{chromatic number} $\chi(G)$ of a graph $G$ is equal to the minimum number of colours required to assign each vertex a colour so that no two adjacent vertices receive the same colour. An \emph{independent set} in a graph $G$ is a set of pairwise non-adjacent vertices.

We say that a set $C\subseteq \mathbb{Z}^d$ is \emph{centrally symmetric} if $C=-C$. Similarly, a function $w: C \to \mathbb{R}$ is centrally symmetric if $w(x)=w(-x)$ for all $x\in C$. For a centrally symmetric set $C\subseteq \mathbb{Z}^d \backslash \{0\}$, we let $G(\mathbb{Z}^d, C)$ be the Cayley graph of $\mathbb{Z}^d$ with generating set $C$.
In other words, $G(\mathbb{Z}^d, C)$ is the graph with vertex set $\mathbb{Z}^d$ where two vertices $u,v\in \mathbb{Z}^d$ are adjacent if $u-v\in C$.
For a Cayley graph $G(\mathbb{Z}^d , C)$, we let
\[
\overline{\alpha}(G(\mathbb{Z}^d, C)) = \sup \{ {\delta(I)} : I \text{ is an independent set of } G(\mathbb{Z}^d, C) \}.
\]
Notice that $\chi(G(\mathbb{Z}^d, C)) \ge 1/ \overline{\alpha}(G(\mathbb{Z}^d, C))$.
To see this inequality, note that if $V(G)$ can be partitioned into $k$ independent sets $I_1, \ldots, I_k$, then $k\overline{\alpha}(G) \geq \delta(I_1)+\dots+\delta(I_k) \geq \delta(V(G))=1$.

For $\alpha \in \mathbb{R}$, we let $e(\alpha) = e^{2 \pi i \alpha}$.
Since $e(\alpha)=e(\alpha +1)$, for $a\in \mathbb{R}/\mathbb{Z}$ we can also define $e(a)$ to be equal to $e(b)$, where $0\le b <1$ is such that $b\equiv a \pmod{1}$.
For $x, y \in \mathbb{Z}^d$, we write $x \cdot y$ for the dot product.

The following is a slightly simplified version of the ratio bound given in~\cite[Theorem~3]{davies2022odd}, which was used to show that the odd distance graph has unbounded chromatic number. This is our main tool and is essentially an analogue of the Lov{\'a}sz theta bound~\cite{lovasz1979shannon} for Cayley graphs of $\mathbb{Z}^d$.

\begin{theorem}[Davies {\cite[Theorem~3]{davies2022odd}}]\label{ratio bound}
    Let $C\subseteq \mathbb{Z}^d \backslash \{0\}$ and $w:C \to \mathbb{R}_{\ge 0}$ be centrally symmetric with $\sum_{x\in C} w(x)$ positive. Then
	\[
	\overline{\alpha}(G(\mathbb{Z}^d, C)) \le \frac{-\inf_{u\in (\mathbb{R}/\mathbb{Z})^d}  \widehat{w}(u) }{  \sup_{u\in (\mathbb{R}/\mathbb{Z})^d}  \widehat{w}(u)   -\inf_{u\in (\mathbb{R}/\mathbb{Z})^d}  \widehat{w}(u)},
	\]
	where
	\[
	\widehat{w}(u) = \sum_{x\in C} w(x) e( u \cdot x).
	\]
\end{theorem}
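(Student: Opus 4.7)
The plan is to prove this as a density-setting analogue of the Hoffman/Lov\'asz ratio bound for finite Cayley graphs, which in the finite case follows from a Parseval identity applied to the indicator of an independent set. The main new ingredient is to pass from the infinite Cayley graph $G(\mathbb{Z}^d, C)$ to finite tori $(\mathbb{Z}/N\mathbb{Z})^d$ via a buffered reduction, apply the finite ratio bound there, and then take $N \to \infty$ using continuity of $\widehat{w}$ on $(\mathbb{R}/\mathbb{Z})^d$.

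Concretely, let $I$ be independent in $G(\mathbb{Z}^d, C)$ with upper density $\delta$, fix $D$ with $C \subseteq [-D,D]^d$, and take $N > 2D$. For a translate $B = a+[0,N)^d$ of a cube of side $N$, let $\bar{I}_B \subseteq (\mathbb{Z}/N\mathbb{Z})^d$ be the image modulo $N$ of $I \cap (a+[D, N-D)^d)$. A short triangle inequality shows that if $\bar{x} - \bar{y} \equiv c \pmod{N}$ for $\bar x, \bar y \in \bar I_B$ and $c \in C$, then in fact $x - y = c$ already in $\mathbb{Z}^d$, so independence of $I$ transfers to independence of $\bar I_B$ in the torus Cayley graph. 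Setting $f = \mathbf{1}_{\bar{I}_B}$ and extending $w$ to all of $(\mathbb{Z}/N\mathbb{Z})^d$ by zero (in particular $w(0)=0$), the sum $\sum_{x,y} f(x)f(y)\,w(x-y)$ vanishes (diagonal by $w(0) = 0$, off-diagonal by independence); rewriting via Parseval on $(\mathbb{Z}/N\mathbb{Z})^d$ (using central symmetry of $w$ to ensure $\widehat{w}$ is real-valued) gives
$$0 \;=\; \frac{1}{N^d} \sum_{v \in (\mathbb{Z}/N\mathbb{Z})^d} |\widehat{f}(v)|^2 \,\widehat{w}(v/N), \qquad \widehat{f}(v) = \sum_{x} f(x)\, e(v \cdot x/N).$$

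To extract the bound, isolate the $v=0$ contribution: $|\widehat{f}(0)|^2 = |\bar I_B|^2$ and $\widehat{w}(0) = \sum_{x\in C} w(x) = M$, which is the supremum of $\widehat{w}$ since $w \geq 0$ forces $|\widehat{w}(u)| \leq \widehat{w}(0)$. Bounding the remaining terms from below by $m_N := \min_{v} \widehat{w}(v/N)$ and using Parseval $\sum_v |\widehat{f}(v)|^2 = N^d |\bar I_B|$, rearrangement yields $|\bar{I}_B|/N^d \leq -m_N/(M - m_N)$. Finally, choose cubes $B$ with $N \to \infty$ along which $|I \cap B|/N^d \to \delta$ (exists by the definition of upper density). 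The buffer costs only $O(D/N)$ in density, and $m_N \to \inf_u \widehat{w}(u) =: m$ by continuity of the trigonometric polynomial $\widehat{w}$ together with density of $\tfrac{1}{N}\mathbb{Z}^d$ in $(\mathbb{R}/\mathbb{Z})^d$. Passing to the limit gives $\delta \leq -m/(M-m)$, as required. The main obstacle is the buffer/wraparound bookkeeping needed to transfer independence to the torus, together with the discrete-to-continuous convergence $m_N \to m$; both are routine once organized carefully, and everything else is a standard finite Parseval calculation.
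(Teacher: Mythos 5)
The paper does not prove Theorem~\ref{ratio bound} at all: it is imported verbatim from \cite{davies2022odd}, so there is no in-paper argument to compare against. Your proposal is a correct and self-contained derivation, and it follows the standard transference route: buffer an independent set into a finite torus $(\mathbb{Z}/N\mathbb{Z})^d$, apply the Hoffman/Lov\'asz ratio bound there via Parseval, and let $N\to\infty$. The key verifications all check out: for $N>2D$ the only representative of $c\in C\subseteq[-D,D]^d$ modulo $N$ that can be realised as a difference of two points of $a+[D,N-D)^d$ is $c$ itself, so independence transfers; the quadratic form $\sum_{x,y}f(x)f(y)w(x-y)$ vanishes and equals $N^{-d}\sum_v|\widehat f(v)|^2\widehat w(v/N)$; $\sup\widehat w=\widehat w(0)$ because $w\ge 0$; the buffer costs $O(D/N)$ in density; and $\min_v\widehat w(v/N)\to\inf_u\widehat w(u)$ by uniform continuity of the trigonometric polynomial $\widehat w$. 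The proof in \cite{davies2022odd} instead works directly on $\mathbb{Z}^d$, pairing $\bigl|\sum_{x\in I\cap[-R,R]^d}e(u\cdot x)\bigr|^2$ against $\widehat w(u)$ over the continuous dual torus and isolating the main term by an averaging device rather than by discretising; the two arguments are the same positive-semidefiniteness computation in different packaging, with your version trading the analytic extraction of the $u=0$ mass for the wraparound bookkeeping.

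One caveat: fixing $D$ with $C\subseteq[-D,D]^d$ silently assumes $C$ is bounded, whereas the statement allows arbitrary $C\subseteq\mathbb{Z}^d\setminus\{0\}$ (with $\sum_{x\in C}w(x)<\infty$ implicitly needed for $\widehat w$ to be defined). This is easily repaired: for finite $C'\subseteq C$ the graph $G(\mathbb{Z}^d,C')$ is a subgraph of $G(\mathbb{Z}^d,C)$, so $\overline{\alpha}(G(\mathbb{Z}^d,C))\le\overline{\alpha}(G(\mathbb{Z}^d,C'))$, and as $C'$ exhausts $C$ the Fourier transforms $\widehat{w|_{C'}}$ converge uniformly to $\widehat w$, so the finite-$C'$ ratio bounds converge to the stated one. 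Since the application in this paper only uses the finite sets $C_{r,n}$, the omission is cosmetic, but it should be stated if you intend to prove the theorem in the generality claimed.
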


Note that $\inf_{u\in (\mathbb{R}/\mathbb{Z})^d}  \widehat{w}(u) = \inf_{u\in \mathbb{R}^d}  \widehat{w}(u)$, and $\sup_{u\in (\mathbb{R}/\mathbb{Z})^d}  \widehat{w}(u) = \sup_{u\in \mathbb{R}^d}  \widehat{w}(u)$.

To apply Theorem~\ref{ratio bound} effectively, we need good estimates for the exponential sums that shall appear.
The supremum will be straightforward to estimate, while the infimum is trickier. The rest of this section is dedicated to proving three lemmas that will be used to estimate the infimum (which is done in Lemma~\ref{inf}).

The first two straightforward lemmas shall allow us to estimate two simpler exponential sums that will appear in our proof.
Similar statements appear in~\cite{davies2022odd}.

\begin{lemma}\label{zero}
    Let $(a_n)_{n=1}^\infty$, $(b_n)_{n=1}^\infty$ be sequences in $\mathbb{R}/ \mathbb{Z}$ such that $\lim_{n \to \infty} a_n \equiv \lim_{n \to \infty} b_n \equiv 0 \pmod{1}$.
    Then,
    \[
    \liminf_{n \to \infty} \frac{1}{n } \sum_{j=1}^{n} \frac{n + 1 -j}{ n+1 } ( e( a_n + jb_n  ) + e( -a_n - jb_n  ))
		\ge 0.
    \]
\end{lemma}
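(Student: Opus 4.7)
The plan is to recognise the left-hand sum as essentially an evaluation of the non-negative Fejér kernel, with a small perturbation controlled by the convergence $a_n \to 0 \pmod 1$. Writing $e(\alpha)+e(-\alpha) = 2\cos(2\pi \alpha)$ and using the angle-addition formula, I would first split the summand as
\[
e(a_n+jb_n)+e(-a_n-jb_n) \;=\; 2\cos(2\pi a_n)\cos(2\pi jb_n) \;-\; 2\sin(2\pi a_n)\sin(2\pi jb_n).
\]
Summing against the triangular weights $(n+1-j)/(n+1)$, the full sum becomes $\cos(2\pi a_n)\, C_n \,-\, \sin(2\pi a_n)\, D_n$, where $C_n$ and $D_n$ denote the resulting weighted cosine and sine sums in the variable $jb_n$.

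The key step is to identify $1+C_n$ with the Fejér kernel. Specifically,
\[
F_n(b) \;:=\; \sum_{k=-n}^{n}\Bigl(1-\tfrac{|k|}{n+1}\Bigr) e(kb) \;=\; \frac{1}{n+1}\,\Bigl|\sum_{k=0}^{n} e(kb)\Bigr|^{2} \;\ge\; 0.
\]
After separating the $k=0$ term and pairing $\pm k$, this reads $F_n(b_n) = 1 + C_n \ge 0$, so that $C_n \ge -1$ for every value of $b_n$. Meanwhile, the trivial estimate $|\sin|\le 1$ combined with $\sum_{j=1}^{n}(n+1-j)/(n+1) = n/2$ gives $|D_n|\le n$.

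Combining these estimates and using that $\cos(2\pi a_n)\to 1$ (hence is eventually positive) and $\sin(2\pi a_n)\to 0$, for all sufficiently large $n$ we obtain
\[
\frac{1}{n}\bigl(\cos(2\pi a_n)\,C_n \,-\, \sin(2\pi a_n)\,D_n\bigr) \;\ge\; -\frac{\cos(2\pi a_n)}{n} \,-\, |\sin(2\pi a_n)|,
\]
and the right-hand side tends to $0$. Taking the liminf completes the argument.

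I do not anticipate any substantial obstacle; the whole proof hinges on recognising the Fejér kernel in $1+C_n$, after which everything reduces to the crude bound on $D_n$ and the continuity of $\cos$ and $\sin$ at $0$. As a side remark, the hypothesis $b_n\to 0 \pmod 1$ is not actually needed for this lemma — non-negativity of $F_n$ holds for every real argument — but is presumably convenient for the way the lemma is invoked later.
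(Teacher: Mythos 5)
Your proof is correct, and it takes a genuinely different route from the paper's. The paper first uses $a_n\to 0$ to replace $\cos(2\pi(a_n+jb_n))$ by $\cos(2\pi jb_n)$, then approximates the resulting weighted cosine sum by a Riemann integral over $[0,n\theta_n]$ (a step that implicitly relies on the step size $\theta_n$ tending to $0$), and finally evaluates that integral in closed form as $\frac{1-\cos(2\pi n\theta_n)}{4\pi^2 n^2\theta_n^2}\ge 0$. You instead recognise that, after isolating the $a_n$-dependence via the angle-addition formula, the weighted cosine sum $C_n$ satisfies $1+C_n = F_n(b_n) = \frac{1}{n+1}\bigl|\sum_{k=0}^n e(kb_n)\bigr|^2\ge 0$, i.e.\ it is (up to the $k=0$ term) the Fej\'er kernel; this gives $C_n\ge -1$ by a finite algebraic identity with no calculus and no limiting argument. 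Combined with the crude bound $|D_n|\le n$ and $\cos(2\pi a_n)\to 1$, $\sin(2\pi a_n)\to 0$, the result follows. Your approach is cleaner and more robust: it avoids the Riemann-sum approximation (which also needs a little care in choosing a representative of $b_n$ near $0$ rather than near $1$), and, as you correctly observe, it does not use the hypothesis $b_n\to 0\pmod 1$ at all.
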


\begin{proof}
    By the identity $e(\theta)+e(-\theta)= 2 \cos(2\pi \theta)$, it is enough to show that
    \[
    \liminf_{n \to \infty} \frac{1}{n } \sum_{j=1}^{n} \frac{n +1 -j}{ n+1 } \cos(2\pi (a_n + j b_n ))
		\ge 0.
    \]
    Since $\lim_{n \to \infty} a_n \equiv 0 \pmod{1}$, we have that
    \[
    \liminf_{n \to \infty} \frac{1}{n } \sum_{j=1}^{n} \frac{n +1 -j}{ n+1 } \cos(2\pi (a_n + j b_n ))
		=
    \liminf_{n \to \infty} \frac{1}{n } \sum_{j=1}^{n} \frac{n +1 -j}{n+1 } \cos(2\pi j b_n ).
    \]
    For each $n\ge 1$, let $0\le \theta_n <1$ be such that $\theta_n \equiv b_n \pmod{1}$.
    If $\theta_n=0$, then observe that
    \[
    \frac{1}{n } \sum_{j=1}^{n} \frac{n +1 -j}{n+1 } \cos(2\pi  j \theta_n )
    =
    \frac{1}{n } \sum_{j=1}^{n} \frac{n +1 -j}{n+1 } 
    = \frac{1}{2}
    > 0.
    \]
    So, we may assume that $\theta_n \not= 0$.

    Observe that
    \[
    \lim_{n \to \infty} \left[ 
    \frac{1}{n } \sum_{j=1}^{n} \frac{n +1 -j}{n+1 } \cos(2\pi j  \theta_n )
    -
    \frac{1}{ n \theta_{n} } \int_{0}^{ n\theta_{n}  }
		\left( 1 - \frac{t}{  n \theta_{n} } \right) \cos(2 \pi  t) \,dt
    \right]
    =0.
    \]
    Therefore, 
     \[
    \liminf_{n \to \infty} \frac{1}{n } \sum_{j=1}^{n} \frac{n + 1 -j}{ n+1 } ( e( a_n + j b_n  ) + e( -a_n - j b_n  ))
    =
    \frac{1}{ n \theta_{n} } \int_{0}^{ n\theta_{n}  }
		\left( 1 - \frac{t}{  n \theta_{n} } \right) \cos(2 \pi  t) \,dt .
    \]
    So, let us evaluate the above integral.
    
    \begin{align*}
		\frac{1}{  n \theta_n  }   \int_{0}^{ n \theta_n  }
		\left( 1 - \frac{ t}{  n \theta_n  } \right) \cos(2\pi t) \,dt 
		& =\frac{1}{ 2 \pi n \theta_n  }   \int_{0}^{ 2\pi n \theta_n  }
		\left( 1 - \frac{t}{ 2\pi n \theta_n  } \right) \cos( t) \,dt \\
		& = \frac{1}{ 2\pi  n \theta_n  }   \int_{0}^{ 2\pi n \theta_n   } \cos(t) \,dt
		- \frac{1}{ 4 \pi^2 n^2 \theta_n^2 }   \int_{0}^{ 2\pi n \theta_n   }
		t \cos(t) \,dt\\
		& = \frac{ \sin( 2\pi  n \theta_n  )  }{ 2\pi n \theta_n  }
		- \frac{ \sin( 2\pi n \theta_n  )  }{ 2\pi n \theta_n  }
		+ \frac{1}{ 4\pi^2 n^2 \theta_n^2  } \int_{0}^{ 
2\pi n \theta_n   } \sin(t) \,dt\\
		& = \frac{1 - \cos( 2\pi n \theta_n )  }{ 4 \pi^2  n^2 \theta_n^2  }.
		\end{align*}
		The lemma now follows since $\frac{1 - \cos( 2\pi n \theta_n )  }{ 4 \pi^2  n^2 \theta_n^2  } \ge 0$ for all $\theta_n \not=0$.
\end{proof}

\begin{lemma}\label{geometric series}
    Let $(a_n)_{n=1}^\infty$, $(b_n)_{n=1}^\infty$ be sequences in $\mathbb{R}/ \mathbb{Z}$ such that $\lim_{n \to \infty} a_n \equiv a \pmod{1}$, and $\lim_{n \to \infty} b_n \equiv b \not\equiv 0 \pmod{1}$.
    Then,
    \[
    \lim_{n \to \infty}  \frac{1}{n} \sum_{j=1}^{n} \frac{n +1 -j}{ n+1 } e( a_n + jb_n  ) 
		= 0.
    \]
\end{lemma}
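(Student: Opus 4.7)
The plan is to factor out the unit-modulus prefactor $e(a_n)$ and reduce to showing that
\[
\frac{1}{n(n+1)}\sum_{j=1}^{n}(n+1-j)\,e(jb_n)\to 0.
\]
The key fact I would exploit is that for any real $\theta$ with $\theta\not\equiv 0\pmod 1$, the partial sums $S_k(\theta)=\sum_{j=1}^{k}e(j\theta)$ are uniformly bounded in $k$; indeed the geometric series formula gives $|S_k(\theta)|\le 2/|1-e(\theta)|$.

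I would then apply Abel summation. Writing $c_j=n+1-j$, one has $c_j-c_{j+1}=1$ and $c_n=1$, so
\[
\sum_{j=1}^{n}(n+1-j)\,e(jb_n)=\sum_{j=1}^{n-1}S_j(b_n)+S_n(b_n)=\sum_{k=1}^{n}S_k(b_n).
\]
Since $b_n\to b\not\equiv 0\pmod 1$, for all sufficiently large $n$ we have $|1-e(b_n)|\ge \tfrac{1}{2}|1-e(b)|>0$, so $|S_k(b_n)|$ is bounded by a constant $M$ independent of both $k$ and $n$. Consequently the magnitude of the full sum is at most $nM$, and dividing by $n(n+1)$ yields a quantity of order $1/n$, which tends to $0$.

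The argument is essentially elementary; the only subtle point is securing uniform control on $|1-e(b_n)|$, which is exactly where the hypothesis $b\not\equiv 0\pmod 1$ enters. An alternative route avoiding Abel summation is to compute $\sum_{j=1}^{n}(n+1-j)z^j$ directly in closed form (two geometric summations, or equivalently one differentiation of $\sum z^j$): one obtains a rational function of $z=e(b_n)$ with denominator $(1-z)^2$ and numerator of magnitude $O(n)$, after which the same normalization finishes the proof. Either approach dispatches the lemma in a few lines.
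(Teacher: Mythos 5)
Your proof is correct, and it takes a genuinely different route from the paper. The paper bounds the weighted exponential sum by splitting the range $\{1,\dots,n\}$ into roughly $\lfloor\sqrt n\rfloor$ blocks of length $\lfloor\sqrt n\rfloor$, pulls the slowly varying weight $\tfrac{n+1-j}{n+1}$ out of each block at a small cost, and then bounds the resulting short geometric sums; this yields a rate of $O(1/\sqrt n)$, where the uniform lower bound on $|1-e(b_n)|$ enters exactly as in your argument. You instead observe that the triangular weights $n+1-j$ have constant first differences, so Abel summation (equivalently, interchanging a double sum) rewrites $\sum_{j=1}^n (n+1-j)e(jb_n)$ as $\sum_{k=1}^n S_k(b_n)$ with $S_k(\theta)=\sum_{j=1}^k e(j\theta)$. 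Since each $|S_k(b_n)|$ is bounded by a constant $M$ uniformly in $k$ and in large $n$, the whole sum is $O(nM)$, and after dividing by $n(n+1)$ you obtain the sharper rate $O(1/n)$. Both arguments hinge on the same essential input — boundedness of geometric partial sums away from $b\equiv 0$ — but your reorganization is shorter and exploits the special structure of the weights more directly, whereas the paper's block method is a more robust template that would also work for weights that are merely slowly varying rather than linear.
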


\begin{proof}
Observe that
\begin{align*}
	\left| \frac{1}{n} \sum_{j=1}^{n} \frac{n +1 -j}{ n+1 } e( j b_n  ) \right|
    & = 
    \frac{1}{n} \left|
    \sum_{m=0}^{\lfloor \sqrt{n} \rfloor -1}
    \sum_{j=m\lfloor \sqrt{n} \rfloor +1}^{(m+1) \lfloor \sqrt{n} \rfloor} \frac{n +1 -j}{ n+1 } e( j b_n  )
    + \sum_{j= \lfloor \sqrt{n} \rfloor^2 + 1}^n \frac{n +1 -j}{ n+1 } e( j b_n  )
    \right|
    \\
    & \le
    \frac{1}{n} \left|
    \sum_{m=0}^{\lfloor \sqrt{n} \rfloor -1}
    \sum_{j=m\lfloor \sqrt{n} \rfloor +1}^{(m+1) \lfloor \sqrt{n} \rfloor} \frac{n +1 -j}{ n+1 } e( j b_n  ) \right|
    + \frac{1}{\sqrt{n}}
    \\
    & \le
    \frac{1}{n} \left|
    \sum_{m=0}^{\lfloor \sqrt{n} \rfloor -1}
    \frac{n -(m+1)\lfloor \sqrt{n} \rfloor}{ n+1 }
    \sum_{j=m\lfloor \sqrt{n} \rfloor +1}^{(m+1) \lfloor \sqrt{n} \rfloor}  e(  j b_n  ) \right|
    +  \lfloor \sqrt{n} \rfloor  \frac{ \lfloor \sqrt{n}  \rfloor   }{n(n+1)}
    + \frac{1}{\sqrt{n}}
    \\
    & \le
    \frac{1}{n} 
    \sum_{m=0}^{\lfloor \sqrt{n} \rfloor -1}
    \frac{n -(m+1)\lfloor \sqrt{n} \rfloor}{ n+1 }
    \left|
    \sum_{j=m\lfloor \sqrt{n} \rfloor +1}^{(m+1) \lfloor \sqrt{n} \rfloor}  e(  j b_n  ) \right|
    + \frac{2}{\sqrt{n}}
    \\ 
    & \le
    \frac{1}{n} 
    \sum_{m=0}^{\lfloor \sqrt{n} \rfloor -1}
    \left|
    \sum_{j=0}^{\lfloor \sqrt{n} \rfloor -1}  e(  j b_n  ) \right|
    + \frac{2}{\sqrt{n}}
    \\
    & =
    \frac{1}{n} 
    \lfloor \sqrt{n} \rfloor
    \left|
    \frac{1-e( \lfloor \sqrt{n} \rfloor b_n )}{1-e(b_n)}
    \right|
    + \frac{2}{\sqrt{n}}
    \\
    & \le
    \frac{2}{\sqrt{n}|1-e(b_n)|} 
    + \frac{2}{\sqrt{n}}
    \\
    & =
    \frac{2}{\sqrt{n}} \left( \frac{1}{|1-e(b_n)|} + 1 \right).
\end{align*}
Since $\lim_{n \to \infty} b_n \equiv b \not\equiv 0 \pmod{1}$, we have that $|1-e(b_n)|$ is eventually bounded away from $0$. So then, $\frac{1}{|1-e(b_n)|} + 1$ is bounded.
Therefore,

\begin{align*}
	\lim_{n \to \infty} \left| \frac{1}{n} \sum_{j=1}^{n} \frac{n +1 -j}{ n+1 } e( a_n + j b_n  ) \right|
    & =
    \lim_{n \to \infty} \left| \frac{1}{n} \sum_{j=1}^{n} \frac{n +1 -j}{ n+1 } e( j b_n  ) \right|
    \\
    & \le
    \lim_{n \to \infty}
    \frac{2}{\sqrt{n}} \left( \frac{1}{|1-e(b_n)|} + 1 \right) = 0,
\end{align*}
as desired.
\end{proof}

The exponential sums that we must examine to prove Theorem~\ref{main2} are more complicated than those given Lemma~\ref{zero} and Lemma~\ref{geometric series}.
However, the following key lemma will allow us to reduce the examination of these more complicated exponential to those given in Lemma~\ref{zero} and Lemma~\ref{geometric series}.

\begin{lemma}\label{circle}
    Let $r$ be a positive integer, let $(x_1,y_1), (x_2,y_2), (x_3,y_3) \in D_r$ be distinct, and let $a,b\in \mathbb{R}$ be such that $(x_1 , y_1) \cdot (a,b) \equiv (x_2 , y_2) \cdot (a,b) \equiv (x_3 , y_3) \cdot (a,b) \pmod{1}$.
    Then, there exists integers $p_a,q_a,p_b,q_b$ such that $a=\frac{p_a}{q_a}$, $b=\frac{p_b}{q_b}$, and $1\le q_a,q_b \le 8r^2$.
\end{lemma}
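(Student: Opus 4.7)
The plan is to turn the three congruences into a $2\times 2$ linear system in $a,b$ with integer coefficients and integer right-hand sides, then solve via Cramer's rule and bound the resulting denominator.

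First I would set $n_1 = (x_1 - x_2)a + (y_1 - y_2)b$ and $n_2 = (x_1 - x_3)a + (y_1 - y_3)b$, which are integers by hypothesis. The determinant of the coefficient matrix is
\[
D = (x_1 - x_2)(y_1 - y_3) - (x_1 - x_3)(y_1 - y_2) = x_1(y_2 - y_3) + x_2(y_3 - y_1) + x_3(y_1 - y_2),
\]
which is (up to sign) twice the area of the triangle on the three given points. The key observation is that three \emph{distinct} points on the circle $D_r$ cannot be collinear, since any line meets a circle in at most two points; therefore $D$ is a nonzero integer. By Cramer's rule,
\[
a = \frac{n_1 (y_1 - y_3) - n_2 (y_1 - y_2)}{D}, \qquad b = \frac{(x_1 - x_2) n_2 - (x_1 - x_3) n_1}{D},
\]
so both $a$ and $b$ are rationals whose reduced denominators divide $|D|$.

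It then remains to bound $|D|$. Since $(x_i, y_i) \in D_r$, we have $|x_i|, |y_i| \le r$, so each difference $x_i - x_j$ or $y_i - y_j$ has absolute value at most $2r$. Each of the two summands defining $D$ is therefore at most $(2r)(2r) = 4r^2$ in absolute value, giving $1 \le |D| \le 8r^2$. Taking $q_a = q_b = |D|$ completes the argument.

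The statement is essentially an exercise in linear algebra, and I expect the only real content to be the non-collinearity step ensuring $D \neq 0$; everything else is the two-line denominator bound $|D| \leq 8r^2$ from $|x_i|, |y_i| \le r$.
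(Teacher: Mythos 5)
Your proof is correct and takes essentially the same route as the paper: both turn the three congruences into a pair of integer-valued linear forms in $a,b$, eliminate one variable to expose the $2\times 2$ determinant, observe it is a nonzero integer because three distinct points of $D_r$ are non-collinear, and bound it by $8r^2$. The only cosmetic difference is that you invoke Cramer's rule to solve for $a$ and $b$ simultaneously, whereas the paper eliminates $b$ by hand to isolate $a$ and then says the argument for $b$ is symmetric.
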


\begin{proof}
    We will argue that there exists such integers $p_a,q_a$ since the existence of such integers $p_b,q_b$ follows similarly.

    Since $(x_1 , y_1) \cdot (a,b) \equiv (x_2 , y_2) \cdot (a,b) \equiv (x_3 , y_3) \cdot (a,b) \pmod{1}$, we have that 
    $\left( (x_2,y_2) - (x_1,y_1) \right) \cdot (a,b) \equiv 0 \pmod{1}$, and 
    $\left( (x_3,y_3) - (x_1,y_1) \right) \cdot (a,b) \equiv 0 \pmod{1}$.
    So, both $\left( (x_2,y_2) - (x_1,y_1) \right) \cdot (a,b) = a(x_2-x_1) + b(y_2-y_1)$ and $\left( (x_3,y_3) - (x_1,y_1) \right) \cdot (a,b) = a(x_3-x_1) + b(y_3-y_1)$ are integers.
    Therefore, $
    (y_3-y_1) \left( a(x_2-x_1) + b(y_2-y_1) \right) - (y_2-y_1) \left( a(x_3-x_1) + b(y_3-y_1) \right)
    =
    a ((x_2-x_1)(y_3-y_1) - (x_3-x_1)(y_2-y_1) )$ is also an integer.

    Since $(x_1,y_1), (x_2,y_2), (x_3,y_3)$ are all distinct, and all lie on a circle, we have that $(x_2-x_1, y_2-y_1)$ and $(x_3-x_1, y_3-y_1)$ are independent in $\mathbb{Z}^2$.
    Additionally, $y_1,y_2,y_3$ are not all equal, so at most one of $y_3-y_1$ and $y_2-y_1$ is zero.
    Therefore, $(y_3-y_1)(x_2-x_1, y_2-y_1) - (y_2-y_1)(x_3-x_1, y_3-y_1) = (    (x_2-x_1)(y_3-y_1) - (x_3-x_1)(y_2-y_1)         , 0)$ is non-zero.
    So, $(x_2-x_1)(y_3-y_1) - (x_3-x_1)(y_2-y_1)$ is non-zero.
    Note that, since $(x_1,y_1), (x_2,y_2), (x_3,y_3) \in D_r$, we have that $|x_1|,|x_2|,|x_3|,|y_1|,|y_2|,|y_3| \le r$.
    So, we further have that \[|(x_2-x_1)(y_3-y_1) - (x_3-x_1)(y_2-y_1)| \le (|x_2|+|x_1|)(|y_3|+|y_1|) + (|x_3|+|x_1|)(|y_2|+|y_1|) \le 8r^2.\]

    As $a ((x_2-x_1)(y_3-y_1) - (x_3-x_1)(y_2-y_1) )$ is an integer, and $(x_2-x_1)(y_3-y_1) - (x_3-x_1)(y_2-y_1)$ is a non-zero integer, it now follows that there exists integers $p_a,q_a$ such that $a=\frac{p_a}{q_a}$, and $1\le q_a \le 8r^2$. We can argue that there exists integers $p_b,q_b$ such that $b=\frac{p_b}{q_b}$, and $1\le q_b \le 8r^2$ similarly.
\end{proof}

\section{Proof}\label{proof}

In this section we prove Theorem~\ref{main2}, and thus Theorem~\ref{main3}.

For $d\in D_r$, we let $\tilde{d}$ be the element of $D_r$, such that the clockwise angle from $\tilde{d}$ to $d$ is equal to $\pi / 2$. In other words, if $d=(x,y)$, then $\tilde{d}=(-y,x)$. For each positive integer $r$, we define the following subset of $\mathbb{Z}^3\backslash \{0\}$, which shall be the generating set of our Cayley graph;
\[
C_{r} = \{  \pm ( (8r^2)!d + j \tilde{d}, jr ) : d\in D_r,  j\in \mathbb{N} \}.
\]

Now we show that $G(\mathbb{Z}^3, C_r)$ is a suitable distance graph in $\mathbb{Z}^3$.

\begin{lemma}\label{embed}
    For every positive integer $r$, $G(\mathbb{Z}^3, C_r)$ is a subgraph of the graph on the same vertex set $\mathbb{Z}^3$, where $(x,y,z),(x',y',z')\in \mathbb{Z}^3$ are adjacent if $(x-x')^2+(y-y')^2-(z-z')^2= (r(8r^2)!)^2$.
\end{lemma}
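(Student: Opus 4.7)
The plan is a direct calculation, verifying that every generator $c \in C_r$ satisfies the required distance equation. Since adjacency in a Cayley graph is determined by the difference of endpoints lying in the generating set, and both the generating set $C_r$ and the target distance relation are invariant under sign change and translation, it suffices to show that every element $c = (x,y,z) \in C_r$ satisfies $x^2 + y^2 - z^2 = (r(8r^2)!)^2$.

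First I would fix an arbitrary $c \in C_r$. Up to a sign (which does not affect any squared coordinate), we may write $c = ((8r^2)! d + j \tilde{d}, jr)$ for some $d \in D_r$ and $j \in \mathbb{N}$. Setting $d = (d_1,d_2)$ so that $\tilde{d} = (-d_2,d_1)$, I would record the two key identities
\[
d \cdot \tilde{d} = -d_1 d_2 + d_1 d_2 = 0, \qquad |d|^2 = |\tilde{d}|^2 = d_1^2 + d_2^2 = r^2,
\]
the second holding by definition of $D_r$.

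Next, I would expand the norm of the first two coordinates:
\[
x^2 + y^2 = \bigl|(8r^2)! \, d + j \tilde{d}\bigr|^2 = \bigl((8r^2)!\bigr)^2 |d|^2 + 2(8r^2)! \, j \, (d \cdot \tilde{d}) + j^2 |\tilde{d}|^2.
\]
Plugging in the identities above, the cross term vanishes and this becomes $\bigl((8r^2)!\bigr)^2 r^2 + j^2 r^2$. Since the third coordinate is $z = jr$, we have $z^2 = j^2 r^2$, and subtracting gives $x^2 + y^2 - z^2 = \bigl((8r^2)!\bigr)^2 r^2 = \bigl(r(8r^2)!\bigr)^2$, as required.

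There is essentially no obstacle here: the definition of $C_r$ is engineered precisely so that the orthogonality of $d$ and $\tilde{d}$ cancels the cross term in $|x \cdot \text{(first two coords)}|^2$, while the extra summand $j^2 r^2$ from $|j\tilde{d}|^2$ is cancelled exactly by $(jr)^2$ from the third coordinate. The factor $(8r^2)!$ is not used in this lemma — it only plays a role later, via Lemma~\ref{circle}, to ensure that the relevant denominators in the Fourier analysis behave nicely.
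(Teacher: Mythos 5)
Your proof is correct and follows essentially the same route as the paper: reduce to checking that each generator $c \in C_r$ satisfies $x^2+y^2-z^2 = (r(8r^2)!)^2$, then expand the squared norm and use the orthogonality $d\cdot\tilde d = 0$ together with $|d|^2=|\tilde d|^2=r^2$ to cancel the cross term and the $j^2r^2$ contributions.
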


\begin{proof}
    By transitivity, it is enough to show that if $(x,y,z)\in C_r$, then $x^2 + y^2 - z^2 = (r(8r^2)!)^2$.
    Observe that for each $(x,y,z)\in C_r$, there exists a triple $\sigma\in \{1,-1\}$, $d=(d_1,d_2)\in D_r$, $j\in \mathbb{N}$ such that $(x,y,z) = \sigma( (8r^2)!d + j \tilde{d}, jr )
    = ( \sigma ( (8r^2)!d_1-jd_2 ) , \sigma ((8r^2)!d_2 + jd_1 ) , \sigma jr  )$.
    Since $d=(d_1,d_2)$ and $\tilde{d}=(-d_2,d_1)$ are orthogonal, we have that
    \begin{align*}
        ( \sigma ( (8r^2)!d_1-jd_2))^2 +  ( \sigma ( (8r^2)!d_2 +  jd_1))^2 - (\sigma jr  )^2
    &=
    \| (8r^2)!d + j \tilde{d}  \|_2^2 - j^2r^2
    \\
    &=
     \| (8r^2)!d \|_2^2 + \| j \tilde{d}  \|_2^2 - j^2r^2
     \\
    &=
     ((8r^2)!)^2\| d \|_2^2 + j^2\|  \tilde{d}  \|_2^2 - j^2r^2
     \\
    &=
    ((8r^2)!)^2 r^2 + j^2r^2 - j^2r^2
     \\
    &=
    (r(8r^2)!)^2,
    \end{align*}
    as desired.
\end{proof}

To apply Theorem~\ref{ratio bound}, it is convenient to consider finite subsets of $C_r$. For each pair of positive integers $r,n$, let
\[
C_{r,n} = \{  \pm ((8r^2)!d + j \tilde{d}, jr) : d\in D_r, 1\le j \le n \},
\]
and let $w_{r,n} : C_{r,n} \to \mathbb{R}_{\ge 0}$ be such that for each $c\in C_{r,n}$ of the form $c= \pm ((8r^2)!d + j \tilde{d}, jr)$, we have $w_{r,n}(c) = \frac{n+1-j}{n(n+1)}$.
Then as in Theorem~\ref{ratio bound}, for $u\in (\mathbb{R}/ \mathbb{Z} )^3$ we have that
\[
\widehat{w}_{r,n}(u)
=
\sum_{d\in D_r} \sum_{\sigma= \pm 1} \sum_{j=1}^n \frac{n+1-j}{n(n+1)} e( \sigma ((8r^2)!d + j \tilde{d}, jr) \cdot u).
\]

To obtain good bounds from Theorem~\ref{ratio bound}, we need good estimates for $ \sup_{u\in (\mathbb{R}/\mathbb{Z})^3} \widehat{w}_{r,n}(u) $ and $\inf_{u\in (\mathbb{R}/\mathbb{Z})^3} \widehat{w}_{r,n}(u)$.
We begin by evaluating $ \sup_{u\in (\mathbb{R}/\mathbb{Z})^3} \widehat{w}_{r,n}(u) $.

\begin{lemma}\label{sup}
    For positive integers $r,n$, we have that
    $
    \sup_{u\in (\mathbb{R}/\mathbb{Z})^3} \widehat{w}_{r,n}(u) = |D_r|
    $.
\end{lemma}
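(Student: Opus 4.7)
The plan is to prove the two matching bounds in the standard way: an upper bound from the triangle inequality and a lower bound by exhibiting a value of $u$ that attains $|D_r|$.

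For the upper bound, I would bound $|\widehat{w}_{r,n}(u)|$ by the total weight $\sum_{c \in C_{r,n}} w_{r,n}(c)$. Since each $c \in C_{r,n}$ has the form $\pm((8r^2)!d + j\tilde{d}, jr)$ for a unique pair $(d,j)$ with $d \in D_r$ and $1 \leq j \leq n$, and each such term is assigned weight $\frac{n+1-j}{n(n+1)}$, the total weight is
\[
\sum_{c \in C_{r,n}} w_{r,n}(c) = 2|D_r|\sum_{j=1}^n \frac{n+1-j}{n(n+1)} = \frac{2|D_r|}{n(n+1)} \cdot \frac{n(n+1)}{2} = |D_r|,
\]
where I used the identity $\sum_{j=1}^n (n+1-j) = n(n+1)/2$. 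By the triangle inequality, $|\widehat{w}_{r,n}(u)| \leq |D_r|$ for every $u \in (\mathbb{R}/\mathbb{Z})^3$.

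For the lower bound, I would simply evaluate at $u = 0$, where $e(0) = 1$, giving $\widehat{w}_{r,n}(0) = \sum_{c \in C_{r,n}} w_{r,n}(c) = |D_r|$ by the same computation.

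There is no real obstacle here — both bounds follow immediately once the total weight is computed. The only thing to be careful about is that the weights are nonnegative and the generating set $C_{r,n}$ is parameterised without repetition, so the total weight calculation is exact.
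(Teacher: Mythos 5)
Your proof is correct and takes the same approach as the paper, which simply asserts that $\sup_u \widehat{w}_{r,n}(u) = \widehat{w}_{r,n}(0)$ (immediate since the weights are nonnegative) and then evaluates $\widehat{w}_{r,n}(0) = 2|D_r|\sum_{j=1}^n \frac{n+1-j}{n(n+1)} = |D_r|$. You spell out the upper and lower bounds separately, but the content is identical.
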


\begin{proof}
    Clearly $\sup_{u\in (\mathbb{R}/\mathbb{Z})^3} \widehat{w}_{r,n}(u) = 
     \widehat{w}_{r,n}(0)
     = 2|D_r|\sum_{j=1}^n \frac{n+1-j}{n(n+1)}
     =|D_r|$.
\end{proof}

Now, we bound $\limsup_{n\to \infty} \inf_{u\in (\mathbb{R}/\mathbb{Z})^3} \widehat{w}_{r,n}(u)$ using the three lemmas from Section~\ref{prelims}.

\begin{lemma}\label{inf}
    For each positive integer $r$, we have that
    $
    \limsup_{n\to \infty} \inf_{u\in (\mathbb{R}/\mathbb{Z})^3} \widehat{w}_{r,n}(u) \ge -2
    $.
\end{lemma}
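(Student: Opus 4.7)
The plan is to use compactness together with the three lemmas from Section~\ref{prelims}. For each $n$, pick by continuity a point $u_n = (u_{n,1}, u_{n,2}, u_{n,3}) \in (\mathbb{R}/\mathbb{Z})^3$ realizing $\inf_{u} \widehat{w}_{r,n}(u)$, and then pass to a subsequence so that $u_n \to u^* = (u_1^*, u_2^*, u_3^*)$. It suffices to prove $\liminf_{n \to \infty} \widehat{w}_{r,n}(u_n) \ge -2$ along this subsequence, since this lower bounds $\limsup_{n \to \infty} \inf_u \widehat{w}_{r,n}(u)$.

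For each $d \in D_r$, set $\alpha_{d,n} = (8r^2)!\, d \cdot (u_{n,1}, u_{n,2})$ and $\beta_{d,n} = \tilde{d} \cdot (u_{n,1}, u_{n,2}) + r u_{n,3}$, both viewed in $\mathbb{R}/\mathbb{Z}$, and let $\alpha_d^*, \beta_d^*$ denote their limits. The $d$-th summand of $\widehat{w}_{r,n}(u_n)$ is then
\[
T_d(n) = \sum_{j=1}^n \frac{n+1-j}{n(n+1)} \bigl( e(\alpha_{d,n} + j\beta_{d,n}) + e(-\alpha_{d,n} - j\beta_{d,n}) \bigr),
\]
and the trivial bound $|T_d(n)| \le 2 \sum_{j=1}^n \tfrac{n+1-j}{n(n+1)} = 1$ always holds. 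I partition $D_r = S_0 \sqcup S_1$, where $S_0 = \{d \in D_r : \beta_d^* \equiv 0 \pmod 1\}$. For every $d \in S_1$, Lemma~\ref{geometric series} (applied twice, with $(a_n, b_n) = (\pm\alpha_{d,n}, \pm\beta_{d,n})$) gives $T_d(n) \to 0$, so I only need to handle the contribution from $S_0$.

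The heart of the argument is a dichotomy on $|S_0|$. If $|S_0| \ge 3$, pick three distinct $d_1, d_2, d_3 \in S_0$; the condition $\beta_{d_i}^* \equiv 0 \pmod 1$ reads $\tilde{d}_i \cdot (u_1^*, u_2^*) \equiv -r u_3^* \pmod 1$ for $i=1,2,3$. Since rotation by $\pi/2$ is a bijection of $D_r$ onto itself, $\tilde{d}_1, \tilde{d}_2, \tilde{d}_3$ are three distinct points of $D_r$ with a common value of their dot product against $(u_1^*, u_2^*)$ modulo $1$, so Lemma~\ref{circle} gives $u_1^*, u_2^* \in \mathbb{Q}$ with denominators at most $8r^2$. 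Hence $(8r^2)!\, u_1^*, (8r^2)!\, u_2^* \in \mathbb{Z}$, which forces $\alpha_d^* \equiv 0 \pmod 1$ for every $d \in D_r$; Lemma~\ref{zero} then yields $\liminf_n T_d(n) \ge 0$ for each $d \in S_0$, and therefore $\liminf_n \widehat{w}_{r,n}(u_n) \ge 0 \ge -2$. If instead $|S_0| \le 2$, the trivial bound $T_d(n) \ge -1$ suffices on $S_0$, giving $\liminf_n \widehat{w}_{r,n}(u_n) \ge -|S_0| \ge -2$.

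The main obstacle is this dichotomy step: one needs the factorial prefactor $(8r^2)!$ in the generating set $C_r$ to be large enough that three simultaneous rational constraints on $u^*$ from Lemma~\ref{circle} automatically clear the denominators of $u_1^*$ and $u_2^*$, thereby collapsing $\alpha_d^*$ to $0 \pmod 1$ for \emph{every} $d$ at once. The rest of the argument is bookkeeping combined with the elementary exponential-sum estimates of Lemma~\ref{zero} and Lemma~\ref{geometric series}.
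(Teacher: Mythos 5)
Your proof is correct and follows essentially the same route as the paper: compactness to extract a convergent subsequence, the split $D_r = S_0 \sqcup S_1$ according to whether $(\tilde d, r)\cdot u^*$ vanishes mod~1, Lemma~\ref{geometric series} to kill the $S_1$ contribution, and the dichotomy on $|S_0|$ combining the trivial $-1$-per-term bound with Lemma~\ref{circle} and Lemma~\ref{zero}. Your partition $S_0,S_1$ and the observation that $d \mapsto \tilde d$ permutes $D_r$ (so Lemma~\ref{circle} applies to $\tilde d_1,\tilde d_2,\tilde d_3$) match the paper's $D_r'$, $D_r^*$ and its use of the same lemma verbatim.
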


\begin{proof}
    Since $(\mathbb{R}/\mathbb{Z})^3$ is compact, for each positive integer $n$, there exists some $(a_n,b_n,c_n)\in (\mathbb{R}/\mathbb{Z})^3$ such that $\inf_{u\in (\mathbb{R}/\mathbb{Z})^3} \widehat{w}_{r,n}(u) = \widehat{w}_{r,n}(a_n,b_n,c_n)$.
    By the Bolzano-Weierstrass theorem, there is a strictly
    increasing sequence of positive integers $(n_f)_{f=1}^\infty$ such that the sequence $\left( (a_{n_f} , b_{n_f}, c_{n_f})  \right)_{f=1}^\infty$ converges to some point $(a,b,c)\in (\mathbb{R}/\mathbb{Z})^3$. Fix such $a,b,c\in \mathbb{R}$.
    It is enough now to show that 
    $
    \limsup_{f\to \infty} \widehat{w}_{r,n}(a_{n_f} , b_{n_f}, c_{n_f})   \ge -2
    $.
    Let $D_r'$ be the elements $d$ of $D_r$ such that $(\tilde{d} , r) \cdot (a,b,c)\equiv 0 \pmod{1}$.
    Let $D_r^*=D_r\backslash D_r'$.

    For $d\in D_r^*$ and $1\le j \le n_f$,
    observe that
    \[
    ((8r^2)!d + j \tilde{d}, jr) \cdot (a_{n_f},b_{n_f},c_{n_f})
    =
      (8r^2)!d \cdot (a_{n_f},b_{n_f}) + j ( \tilde{d}, r) \cdot (a_{n_f},b_{n_f} , c_{n_f}).
    \]
    Then for $d\in D_r^*$, we have $\lim_{f\to \infty} (8r^2)!d \cdot (a_{n_f},b_{n_f}) \equiv a^* \pmod{1}$ for some $a^* \in \mathbb{R} / \mathbb{Z}$.
    We also have for $d\in D_r^*$, that $\lim_{f\to \infty}  ( \tilde{d}, r) \cdot (a_{n_f},b_{n_f} , c_{n_f}) \equiv b^* \pmod{1}$ for some $b^* \in \mathbb{R} / \mathbb{Z}$ with $b^*\not\equiv 0 \pmod{1}$, (because $(\tilde{d} , r) \cdot (a,b,c) \not\equiv 0 \pmod{1}$ for $d\in D_r^*$). Since $D_r^*$ is a finite set, by Lemma~\ref{geometric series}, it now follows that
    \[
    \lim_{f \to \infty} \sum_{d\in D_r^*} \sum_{\sigma= \pm 1} \sum_{j=1}^{n_f} \frac{n_f+1-j}{n_f(n_f+1)} e( \sigma ((8r^2)!d + j \tilde{d}, jr) \cdot (a_{n_f}, b_{n_f}, c_{n_f})) = 0.
    \]
    So, it remains to examine the summation over $D_r'=D_r\backslash D_r^*$.

    If $|D_r'|\le 2$, then the lemma now follows since $\sum_{j=1}^{n_f} \frac{n_f+1-j}{n_f(n_f+1)}= \frac{1}{2}$ for every positive integer $f$.
    So, we may now assume that $|D_r'|\ge 3$.
    
    Since $|D_r'|\ge 3$, there exists distinct $d_1,d_2,d_3\in D_r$ such that $\tilde{d_1} \cdot (a,b) \equiv \tilde{d_2} \cdot (a,b)  \equiv \tilde{d_3} \cdot (a,b)  \equiv  -rc \pmod{1}$. Then by Lemma~\ref{circle}, there exists integers $p_a,q_a,p_b,q_b$ such that $a=\frac{p_a}{q_a}$, $b=\frac{p_b}{q_b}$, and $1\le q_a,q_b \le 8r^2$.
    So, both $(8r^2)!/q_a$ and $(8r^2)!/q_b$ are integers.
    It follows that $(8r^2)!d\cdot (a,b)\equiv d\cdot (p_a(8r^2)/q_a,p_b(8r^2)/q_b)  \equiv 0 \pmod{1}$ for each $d\in D_r$.
    In particular, for each $d\in D_r$, we have that $\lim_{f\to \infty} (8r^2)!d\cdot (a_f,b_f)\equiv 0 \pmod{1}$.
    For each $d\in D_r'$, we also have that $\lim_{f\to \infty}  \tilde{d} \cdot ( \tilde{d}, r) \cdot (a_{n_f},b_{n_f} , c_{n_f}) \equiv 0 \pmod{1}$.
    Since $D_r'$ is a finite set, by Lemma~\ref{zero}, it now follows that
    \[
    \limsup_{f \to \infty} \sum_{d\in D_r'} \sum_{\sigma= \pm 1} \sum_{j=1}^{n_f} \frac{n_f+1-j}{n_f(n_f+1)} e( \sigma ((8r^2)!d + j \tilde{d}, jr) \cdot (a_{n_f}, b_{n_f}, c_{n_f})) \ge 0.
    \]
    Therefore, in the case that $|D_r'|\ge 3$, we have $\limsup_{f\to \infty} \widehat{w}_{r,n_f}(a_{n_f},b_{n_f}, c_{n_f}) \ge 0$, as desired.
\end{proof}

We are now ready to show that the Cayley graphs $G(\mathbb{Z}^3,C_r)$ have large chromatic number.

\begin{proposition}\label{prop}
    For each positive integer $r$, we have $\overline{\alpha}(G(\mathbb{Z}^3,C_r)) \le \frac{1}{1+|D_r|/2}$.
    As a consequence, $\chi(G(\mathbb{Z}^3,C_r)) \ge 1+|D_r|/2$.
\end{proposition}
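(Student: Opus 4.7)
The plan is to plug the estimates of Lemma~\ref{sup} and Lemma~\ref{inf} into Theorem~\ref{ratio bound} applied to the finite sets $C_{r,n}$ with weight functions $w_{r,n}$, and then take $n\to\infty$.

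First I would note that since $C_{r,n} \subseteq C_r$, the graph $G(\mathbb{Z}^3, C_{r,n})$ is a spanning subgraph of $G(\mathbb{Z}^3, C_r)$, so every independent set of $G(\mathbb{Z}^3, C_r)$ is also independent in $G(\mathbb{Z}^3, C_{r,n})$. This gives
\[
\overline{\alpha}(G(\mathbb{Z}^3, C_r)) \le \overline{\alpha}(G(\mathbb{Z}^3, C_{r,n}))
\]
for every positive integer $n$. Theorem~\ref{ratio bound} applied to $C_{r,n}$ and $w_{r,n}$, together with Lemma~\ref{sup} (which evaluates the supremum as $|D_r|>0$, so the theorem is applicable), then yields
\[
\overline{\alpha}(G(\mathbb{Z}^3, C_r)) \le \frac{-\inf_{u\in (\mathbb{R}/\mathbb{Z})^3}\widehat{w}_{r,n}(u)}{|D_r| - \inf_{u\in (\mathbb{R}/\mathbb{Z})^3}\widehat{w}_{r,n}(u)}
\]
for every positive integer $n$.

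The next step is to choose $n$ well. The function $\ell \mapsto -\ell/(|D_r|-\ell)$ is strictly decreasing on $(-\infty, |D_r|)$, so I want $\inf_u \widehat{w}_{r,n}(u)$ to be as large as possible. By Lemma~\ref{inf}, $\limsup_{n\to\infty}\inf_u \widehat{w}_{r,n}(u) \ge -2$, so for every $\epsilon>0$ there are infinitely many $n$ with $\inf_u \widehat{w}_{r,n}(u) \ge -2-\epsilon$. Substituting into the displayed bound, these $n$ give $\overline{\alpha}(G(\mathbb{Z}^3, C_r)) \le (2+\epsilon)/(|D_r|+2+\epsilon)$; letting $\epsilon\to 0$ produces
\[
\overline{\alpha}(G(\mathbb{Z}^3, C_r)) \le \frac{2}{|D_r|+2} = \frac{1}{1+|D_r|/2}.
\]
The chromatic number consequence $\chi(G(\mathbb{Z}^3, C_r)) \ge 1+|D_r|/2$ then follows immediately from the general inequality $\chi(G(\mathbb{Z}^d, C)) \ge 1/\overline{\alpha}(G(\mathbb{Z}^d,C))$ recorded in Section~\ref{prelims}.

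Since all of the substantive analytic work is already done in Lemmas~\ref{sup} and~\ref{inf}, there is no real obstacle left in this step; the only mild care needed is the elementary monotonicity-and-$\epsilon$ argument that converts the $\limsup$ in Lemma~\ref{inf} into an honest upper bound on $\overline{\alpha}(G(\mathbb{Z}^3, C_r))$.
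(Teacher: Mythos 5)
Your proposal is correct and takes essentially the same approach as the paper: apply Theorem~\ref{ratio bound} to the finite truncations $C_{r,n}$ with weights $w_{r,n}$, feed in Lemmas~\ref{sup} and~\ref{inf}, and let $n\to\infty$. The only cosmetic difference is that you convert the $\limsup$ into the final bound via a monotonicity-and-$\epsilon$ argument, whereas the paper passes to a $\liminf$ of the ratio (also using the auxiliary observation $\inf_u\widehat{w}_{r,n}(u)\le 0$); both routes are sound and yield the same conclusion.
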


\begin{proof}
    By Lemma~\ref{sup}, we have that $
    \sup_{u\in (\mathbb{R}/\mathbb{Z})^3} \widehat{w}_{r,n}(u) = |D_r|
    $ for every positive integer $n$.
    By Lemma~\ref{inf}, we have that
    $\limsup_{{n} \to \infty} \inf_{u\in (\mathbb{R} / \mathbb{Z})^3}  \widehat{w}_{r,n}(u) \ge -2$.
    Observe that for every positive integer $n$, we have that $\inf_{u\in (\mathbb{R} / \mathbb{Z})^3}  \widehat{w}_{r,n}(u)\le 0$, since 
    $\int_{[0,1]^3} \widehat{w}_{r,n}(u) \,du = 0$.
    For every positive integer $n$, $G(\mathbb{Z}^3, C_{r,n})$ is a subgraph of $G(\mathbb{Z}^3, C_{r})$.
    Therefore, by Theorem~\ref{ratio bound}, we have
    \begin{align*}
    \overline{\alpha}(G(\mathbb{Z}^3, C_{r} )  
    & \le \liminf_{{n} \to \infty} \overline{\alpha}(G(\mathbb{Z}^3, C_{r,n} )   \\
    & \le \liminf_{{n} \to \infty} 
	\frac{-\inf_{u\in (\mathbb{R} / \mathbb{Z})^3}  \widehat{w}_{r,n}(u) }{  \sup_{u\in (\mathbb{R} / \mathbb{Z})^3}  \widehat{w}_{r,n}(u)   -\inf_{u\in (\mathbb{R} / \mathbb{Z})^3}  \widehat{w}_{r,n}(u)},
    \\
    & =  
    \frac{-\limsup_{{n} \to \infty} \inf_{u\in (\mathbb{R} / \mathbb{Z})^3}  \widehat{w}_{r,n}(u) }{  |D_r|   -\limsup_{{n} \to \infty} \inf_{u\in (\mathbb{R} / \mathbb{Z})^3}  \widehat{w}_{r,n}(u)},
    \\
    & \le \frac{1}{1+|D_r|/2}.
    \end{align*}
    As a consequence, $\chi( G(\mathbb{Z}^3, C_{r} )  ) \ge 1/\overline{\alpha}(G(\mathbb{Z}^3, C_{r} ) \ge 1+|D_r|/2$.
\end{proof}

Theorem~\ref{main2} (and thus Theorem~\ref{main3}) now follows from Lemma~\ref{embed} and Proposition~\ref{prop}.

\section*{Acknowledgements}

The author thanks Jim Geelen for discussions on the problem of whether or not $\chi(\mathbb{R}^{2,2})$ is finite, which lead to this work.
The author also thanks Mohammad Bardestani, Keivan Mallahi-Karai, Vjekoslav Kovač, Mike Krebs, and the anonymous referee for a number of helpful comments that have improved the paper.
In particular, Mohammad Bardestani and Keivan Mallahi-Karai pointed out that Theorem \ref{main1} could be strengthened from a previous version of this paper (where ``$\mathbb{Q}^2$'' was replaced with ``$\mathbb{R}^2$''), answering a conjecture from that version.

\bibliographystyle{amsplain}
%\bibliography{Spacetime}

\end{document}